\DeclareSymbolFont{largesymbolsA}{U}{txexa}{m}{n}
\DeclareMathSymbol{\varprod}{\mathop}{largesymbolsA}{16}
\newtheorem{theorem}{Theorem}[section]
\newtheorem{conj}{Conjecture}[section]
\newtheorem{lem}[theorem]{Lemma}
\newtheorem{prop}[theorem]{Proposition}
\newtheorem{cor}{Corollary}[theorem]
\theoremstyle{definition}
\theoremstyle{remark}
\DeclareMathOperator{\sign}{sgn}
\numberwithin{equation}{section}
\begin{document}
\author{Ankush Goswami}
\address{Research Institute for Symbolic Computation (RISC), JKU, Linz.}
\email{ankushgoswami3@gmail.com, ankush.goswami@risc.jku.at}
\title[On sums related to Borwein conjectures]{On sums of coefficients of polynomials related to the Borwein conjectures}
 \author{Venkata Raghu Tej Pantangi}
 \address{Department of Mathematics, Southern University of Science and Technology (SUSTECH), Shenzhen, China.}
\email{pvrt1990@gmail.com, pantangi@sustech.edu.cn}

\thanks{}

\subjclass[2010]{11P81, 11P83, 11P84}
\keywords{Borwein conjectures, Positivity, Multinomial, Sieve.}

\date{}

\begin{abstract}
 Recently, Li (Int. J. Number Theory 2020) obtained an asymptotic formula for a certain partial sum involving coefficients for the polynomial in the First Borwein conjecture. As a consequence, he showed the positivity of this sum. His result was based on a sieving principle discovered by himself and Wan (Sci. China. Math. 2010). In fact, Li points out in his paper that his method can be generalized to prove an asymptotic formula for a general partial sum involving coefficients for any prime $p>3$. In this work, we extend Li's method to obtain asymptotic formula for several partial sums of coefficients of a very general polynomial. We find that in the special cases $p=3, 5$, the signs of these sums are consistent with the three famous Borwein conjectures. Similar sums have been studied earlier by Zaharescu (Ramanujan J. 2006) using a completely different method. We also improve on the error terms in the asymptotic formula for Li and Zaharescu. Using a recent result of Borwein (JNT 1993), we also obtain an asymptotic estimate for the maximum of the absolute value of these coefficients for primes $p=2, 3, 5, 7, 11, 13$ and for $p>15$, we obtain a lower bound on the maximum absolute value of these coefficients for sufficiently large $n$. 
\end{abstract}
\maketitle
\section{Introduction}
In $1990$, Peter Borwein (see \cite{And-Bor}) empirically discovered quite a number of mysteries involving sign patterns of coefficients of certain polynomials. The most easily stated are the following:
\begin{conj}[First Borwein conjecture]\label{bor1}
For the polynomials $A_n(q), B_n(q)$ and $C_n(q)$ defined by 
\begin{eqnarray*}
\prod_{j=1}^n(1-q^{3j-2})(1-q^{3j-1})=A_n(q^3)-qB_n(q^3)-q^2C_n(q^3)
\end{eqnarray*}
each has non-negative coefficients. 
\end{conj}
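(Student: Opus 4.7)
The plan is to attack the conjecture directly by finding a positive combinatorial expression for each coefficient of $A_n, B_n, C_n$, rather than settling for asymptotics. First I would rewrite the product in $q$-Pochhammer form: since
\[
\prod_{j=1}^n(1-q^{3j-2})(1-q^{3j-1}) \;=\; \frac{(q;q)_{3n}}{(q^3;q^3)_n},
\]
denote this polynomial by $P_n(q)$ and use the standard roots-of-unity trisection to isolate the three component polynomials. With $\omega=e^{2\pi i/3}$,
\[
A_n(q^3) \;=\; \tfrac{1}{3}\bigl(P_n(q)+P_n(\omega q)+P_n(\omega^2 q)\bigr),
\]
and analogous averages (weighted by $\omega^{-k}, \omega^{-2k}$) give $qB_n(q^3)$ and $q^2 C_n(q^3)$. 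Each of $P_n(\omega q)$ and $P_n(\omega^2 q)$ simplifies dramatically because $(1-\omega^a q^a)$ collapses whenever $3\mid a$, yielding a product that mixes smaller Pochhammers $(q^3;q^3)_{\cdot}$ with a few low-degree factors. This is the structural input on which every subsequent step depends.

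Next I would look for a \emph{manifestly positive} partition-theoretic interpretation of the coefficients. The most promising route is to realise $A_n, B_n, C_n$ as generating functions for partitions with prescribed part-residues modulo $3$ and a sign-reversing involution that cancels out all negative contributions. Concretely, expand $P_n(q)$ via the finite Jacobi triple product / Bressoud-type identity, group the resulting series into residue classes mod $3$, and construct an explicit involution on the set of "signed" partitions contributing to a fixed coefficient in each of $A_n, B_n, C_n$, whose set of fixed points is the genuinely positive piece. The structural identity I would aim for is something like a finite analogue of
\[
\frac{(q;q)_\infty}{(q^3;q^3)_\infty} \;=\; \sum_{k\in\mathbb{Z}}(-1)^k q^{k(3k-1)/2},
\]
refined so that after the trisection each summand lands entirely in $A_n$, in $qB_n$, or in $q^2 C_n$, making the non-negativity transparent.

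The alternative route, should the bijective programme stall, is to prove the conjecture by induction on $n$ through a careful $q$-difference equation for $P_n(q)$. One derives a recursion $P_{n}(q) = P_{n-1}(q)\cdot(1-q^{3n-2})(1-q^{3n-1})$ and tracks its action on the residue classes; the induction step reduces non-negativity of the coefficients of $A_n,B_n,C_n$ to a finite list of polynomial inequalities involving the coefficients of $A_{n-1},B_{n-1},C_{n-1}$ shifted in degree. If these inequalities can be arranged into a telescoping or Bailey-pair structure, induction closes the argument.

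The main obstacle, and the reason this conjecture has resisted attack for three decades, is that direct coefficient-by-coefficient positivity is extraordinarily delicate: $P_n(q)$ itself has substantial sign changes, the cancellation producing the non-negative $A_n,B_n,C_n$ is massive, and the natural expansions (e.g.\ $q$-binomial theorem, finite Jacobi triple product) all produce terms of both signs with no obvious sign-reversing involution. Analytic tools such as the Li--Wan sieve (used elsewhere in this paper) are powerful enough to control averaged or partial sums of the coefficients, but they are not sharp enough to force positivity of an \emph{individual} coefficient, especially near the edges of the support where the coefficients are small and oscillatory. Any successful proof must therefore locate an exact, not merely asymptotic, identity — essentially a hidden Rogers--Ramanujan-type structure — that explains why the cancellation is always complete; finding that identity is the decisive step, and every paragraph above is contingent on it.
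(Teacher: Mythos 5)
This statement is a \emph{conjecture} in the paper: the authors do not prove it, and nothing in the paper claims to. What the paper actually establishes (via the Li--Wan sieve) is positivity/negativity of certain \emph{partial sums} of the coefficients of $T_{p,s,n}$, together with asymptotics for those sums; the full conjecture is attributed to Wang's analytic (saddle-point) proof, which is cited but not reproduced. So there is no ``paper proof'' for your proposal to match, and your proposal must be judged as a standalone argument for the conjecture itself --- and as such it is not a proof. Its decisive ingredient, the sign-reversing involution (equivalently, an exact identity after trisection in which every surviving term is manifestly non-negative), is never constructed; you say explicitly that every step is contingent on finding it. A plan whose key object is hypothetical establishes nothing, and the two fallback routes (induction on $n$ through the recursion $P_n=P_{n-1}(1-q^{3n-2})(1-q^{3n-1})$, or a Bailey-pair rearrangement) are likewise only named, not carried out; indeed the induction route is known to be delicate precisely because multiplying by $(1-q^{3n-2})(1-q^{3n-1})$ does not preserve coefficientwise non-negativity of the three residue components in any obvious way.

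Two concrete inaccuracies in the sketch are worth flagging. First, the ``structural identity'' you aim for is false as written: $\sum_{k\in\mathbb{Z}}(-1)^k q^{k(3k-1)/2}$ equals $(q;q)_\infty$ (Euler's pentagonal number theorem via the triple product), not $(q;q)_\infty/(q^3;q^3)_\infty$; and the pentagonal exponents $k(3k-1)/2$ hit all residue classes mod $3$, so there is no reason a refinement of it should ``land entirely'' in one of $A_n$, $qB_n$, $q^2C_n$. Second, the simplification you invoke for $P_n(\omega q)$ is misstated: $P_n$ contains no factors $1-q^a$ with $3\mid a$, so nothing ``collapses''; the useful fact is rather that $P_n(q)P_n(\omega q)P_n(\omega^2 q)$ simplifies, which is a much weaker handle. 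If you want a result in the spirit of the paper, the honest target is the averaged statement: for $b\in\mathbb{Z}_{N_3}$ the sums $M_{3,1,n}(b)$ of coefficients in arithmetic progressions are $\frac{2\cdot 3^n}{n+1}(1+o(1))$ or $-\frac{3^n}{n+1}(1+o(1))$ according as $3\mid b$ or not, proved by character sums and the Li--Wan symmetric sieve --- a method that, as you correctly note, controls averages but cannot by itself force positivity of an individual coefficient.
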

\begin{conj}[Second Borwein conjecture]\label{bor2}
For the polynomials $\alpha_n(q), \beta_n(q)$ and $\gamma_n(q)$ defined by 
\begin{eqnarray*}
\prod_{j=1}^n(1-q^{3j-2})^2(1-q^{3j-1})^2=\alpha_n(q^3)-q\beta_n(q^3)-q^2\gamma_n(q^3)
\end{eqnarray*}
each has non-negative coefficients. 
\end{conj}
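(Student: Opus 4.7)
The plan is to reduce Conjecture~\ref{bor2} to a coefficient-wise inequality between explicit polynomials built from the First-conjecture polynomials, and then to attack those inequalities asymptotically using the sieve-based machinery of this paper. Writing $x=q^3$, and expanding $\bigl(A_n(x)-qB_n(x)-q^2C_n(x)\bigr)^2$, then collecting by $q$ modulo $3$, one obtains
\begin{align*}
\alpha_n(x) &= A_n(x)^2 + 2x\, B_n(x)C_n(x),\\
\beta_n(x)  &= 2A_n(x)B_n(x) - x\, C_n(x)^2,\\
\gamma_n(x) &= 2A_n(x)C_n(x) - B_n(x)^2.
\end{align*}
Conditional on Conjecture~\ref{bor1} (recently settled by Wang), the non-negativity of $\alpha_n$ is immediate, so the real content of Conjecture~\ref{bor2} reduces to the pair of coefficient inequalities
\[
2\,[x^k](A_n B_n) \;\geq\; [x^{k-1}](C_n^2), \qquad 2\,[x^k](A_n C_n) \;\geq\; [x^k](B_n^2) \qquad (k\geq 0).
\]

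I would try to establish these inequalities by extending the Li--Wan sieve, as sharpened in this paper, from a single product to a convolution. First, apply a third-root-of-unity filter to $\prod_{j=1}^n(1-q^{3j-2})(1-q^{3j-1})$ to represent each individual coefficient $[x^k]A_n$, $[x^k]B_n$, $[x^k]C_n$ as a weighted sum over $n$-tuples with a prescribed residue class modulo~$3$, in direct analogy with the partial-sum formulas derived in this paper. Second, apply the sieve to each representation to obtain an asymptotic expansion with an error term that is uniform in $k$. Third, convolve these expansions to produce asymptotic formulas for $[x^k](A_nB_n)$, $[x^k](A_nC_n)$, $[x^{k-1}](C_n^2)$ and $[x^k](B_n^2)$. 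Finally, compare leading terms: one expects the main term of each difference above to be strictly positive and of order comparable to the Borwein-type coefficient maximum estimated elsewhere in this paper, so that the error is absorbed for $n$ sufficiently large.

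The hard part will be the uniformity in $k$. Conjecture~\ref{bor2} demands the correct sign at \emph{every} coefficient, including many that are empirically very close to zero and lie near the edges of the support, where the coefficients themselves are smallest. Asymptotic methods control the bulk of the support well, but their error terms typically degrade near the boundary, exactly where the margin for a wrong sign is narrowest. Producing an error estimate strong enough to force the sign at every $k$ simultaneously is the principal difficulty; it is the same obstruction that prevented an earlier asymptotic resolution of Conjecture~\ref{bor1}, which Wang ultimately settled via a delicate $q$-series identity rather than by asymptotics. A realistic route is therefore a hybrid argument: use the sharpened sieve to settle all but a bounded range of $k$, and dispose of the remaining low- and high-degree coefficients by a direct combinatorial manipulation or a computer-assisted verification of a finite check.
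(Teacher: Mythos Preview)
The paper does not prove Conjecture~\ref{bor2}; it is stated there as an open problem. What the paper actually establishes for $p=3$, $s=2$ is Theorem~\ref{main4} and Corollary~\ref{sumbor2}: asymptotics and sign for the \emph{partial sums} $M_{3,2,n}(b)=\sum_{\ell} t_{b+\ell N_3,3,2}$ along arithmetic progressions of common difference $N_3=3(n+1)$. These averaged statements are consistent with Conjecture~\ref{bor2} but are far weaker than the coefficient-wise nonnegativity it asserts. So there is no ``paper's own proof'' of this statement to compare against.

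Your plan has a genuine gap at the step where you propose to ``represent each individual coefficient $[x^k]A_n$, $[x^k]B_n$, $[x^k]C_n$'' via the Li--Wan sieve ``in direct analogy with the partial-sum formulas derived in this paper.'' The analogy does not go through. The reason the paper's character method works is that it computes in the group $G=\mathbb{Z}_{N_p}$ with $N_p=p(n+1)$, whose order is \emph{linear} in $n$; this is precisely why the output is a sum of coefficients along a progression of step $N_p$, not a single coefficient. To isolate an individual $t_{j,p,s}$ one would need characters of a cyclic group of order exceeding the degree $d_{p,s,n}\asymp n^2$. In that regime the crucial estimates in (\ref{Sr}) collapse: the main term scales like $p^{sN_p/p}/|G|$ while the error involves $\sum_{\psi} p^{sN_p/o(\psi)}$, and once $|G|$ is of order $n^2$ there are far too many characters of moderate order for the main term to dominate. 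In short, the sieve as set up here cannot see individual coefficients, only their averages.

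You in fact diagnose the obstruction yourself in your last paragraph (uniformity in $k$, degradation near the edges of the support), but then the proposal does not supply any mechanism to overcome it; ``convolve the expansions'' inherits the same averaging, and the suggested ``hybrid'' with a finite check is not finite, since the problematic range of $k$ grows with $n$. As written, the plan would at best recover another averaged statement of the type already proved in the paper, not Conjecture~\ref{bor2}.
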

\begin{conj}[Third Borwein conjecture]\label{bor3}
For the polynomials $\nu_n(q), \phi_n(q), \chi_n(q)$, $\psi_n(q)$ and $\omega_n(q)$ defined by 
\begin{eqnarray*}
\prod_{j=1}^n(1-q^{5j-4})(1-q^{5j-3})(1-q^{5j-2})(1-q^{5j-1})=\nu_n(q^5)-q\phi_n(q^5)-q^2\chi_n(q^5)-q^3\psi(q^5)-q^4\omega_n(q^5)
\end{eqnarray*}
each has non-negative coefficients. 
\end{conj}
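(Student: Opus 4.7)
The plan is to adapt Chen Wang's 2022 resolution of the First Borwein conjecture to the $p=5$ setting. Wang produced a manifestly non-negative combinatorial formula for the coefficients of $A_n, B_n, C_n$ by identifying them with generating functions for cylindric partitions, via a principal specialization of characters for certain modules of the affine Lie algebra $\widehat{\mathfrak{sl}}_3$. The analogue for the Third Borwein conjecture would be to exhibit each of $\nu_n,\phi_n,\chi_n,\psi_n,\omega_n$ as the generating function of an explicit family of combinatorial objects (for instance, cylindric plane partitions of an appropriate profile attached to $\widehat{\mathfrak{sl}}_5$), so that non-negativity becomes automatic rather than analytic.

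Concretely, I would set $P(q) := \prod_{j=1}^n(1-q^{5j-4})(1-q^{5j-3})(1-q^{5j-2})(1-q^{5j-1})$ and apply the standard $5$-dissection
\[
g_r(q^5)\;=\;\frac{q^{-r}}{5}\sum_{j=0}^{4}\zeta^{-jr}\,P(\zeta^{j} q),\qquad \zeta:=e^{2\pi i/5},
\]
so that, up to signs, the five polynomials are recovered as $g_0,g_1,g_2,g_3,g_4$. Because $\zeta^{5k-a}=\zeta^{-a}$, each $P(\zeta^j q)$ rewrites as $\prod_{k=1}^n\prod_{a=1}^{4}(1-\zeta^{-aj}q^{5k-a})$, a product whose structure is independent of $k$ in its root-of-unity part. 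The aim is then to rearrange this into a sum whose terms are manifestly non-negative.

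Next I would search for a Rogers--Ramanujan--type identity expressing each $g_r(q)$ as a non-negative combination of products of $(q^5;q^5)_n$-ratios and $q$-binomial coefficients, or equivalently as a character of a level-$k$ integrable highest weight module of $\widehat{\mathfrak{sl}}_5$. Bressoud's refinement, known to imply all three Borwein conjectures, furnishes a concrete candidate: it expresses these coefficients as generating functions for partitions with restricted hook differences. A natural intermediate target is therefore to prove Bressoud's conjecture in the $p=5$ case; together with a clean match between Bressoud's partition statistic and one of the five residue classes modulo $5$ of $P(q)$, this would conclude the argument.

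The principal obstacle is this last step. For $p=3$ Wang linked the dissection directly to a known Hall--Littlewood identity and to well-understood characters of $\widehat{\mathfrak{sl}}_3$; no analogous identity is currently available for $p=5$, and Bressoud's own conjecture has resisted all direct attacks at this level. Thus the essential unresolved task is to discover (or derive) the correct Schur-positive, representation-theoretic interpretation of the five polynomials $\nu_n,\phi_n,\chi_n,\psi_n,\omega_n$. This is precisely the heart of the conjecture, and the reason it has remained open: the asymptotic or sieve-theoretic tools used elsewhere in the present paper can establish positivity of \emph{partial sums} of coefficients, but yield no pointwise control and hence cannot, without substantial new input, settle the conjecture as stated.
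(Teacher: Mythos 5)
There is no proof to compare against here: in the paper this statement is stated as Conjecture \ref{bor3}, an open problem, and the paper's actual results about it are much weaker --- Theorem \ref{main5} and Corollary \ref{sumbor3} establish, via the Li--Wan sieve, the sign of certain \emph{partial sums} of the coefficients of $\nu_n,\phi_n,\chi_n,\psi_n,\omega_n$ along arithmetic progressions modulo $N_5=5(n+1)$, which is consistent with but far from coefficientwise non-negativity. Your proposal does not close that distance. As you yourself concede in the final paragraph, the decisive step --- exhibiting each of the five dissected polynomials as a manifestly non-negative generating function (via cylindric partitions, an $\widehat{\mathfrak{sl}}_5$ character, or a proof of the relevant case of Bressoud's conjecture) --- is exactly the content of the conjecture and is not supplied. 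The $5$-dissection formula $g_r(q^5)=\tfrac{q^{-r}}{5}\sum_{j=0}^{4}\zeta^{-jr}P(\zeta^j q)$ is routine bookkeeping; everything after it is a description of where a proof might come from, not an argument. So what you have written is a research program with an acknowledged gap at its heart, not a proof.

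Two factual corrections are also in order. First, the proof of the First Borwein conjecture by Chen Wang that this paper cites is an \emph{analytic} proof by the saddle point method (together with Andrews' formula and recursions); it is not the representation-theoretic or cylindric-partition argument you attribute to it, and that circumstance matters, because the analytic method does not obviously generalize to $p=5$ either, while the combinatorial route you sketch has no known $p=5$ counterpart. Second, your closing remark that the sieve-theoretic tools of this paper ``can establish positivity of partial sums but yield no pointwise control'' is accurate and is precisely why the paper states the Third Borwein conjecture only as a conjecture: nothing in your proposal, nor in the paper, upgrades the partial-sum sign information of Corollary \ref{sumbor3} to the non-negativity of individual coefficients.
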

Recently, Wang \cite{Wang} gave an analytic proof of the First Borwein conjecture using saddle point method. His proof, besides other things, relied on a formula of Andrews \cite[Theorem 4.1]{And-Bor} and the following recursive relations \cite[Theorem 3.1]{And-Bor}:  
\begin{eqnarray*}
A_n(q)&=&(1+q^{2n-1})A_{n-1}(q)+q^n B_{n-1}(q)+q^nC_{n-1}(q),\\
 B_{n}(q)&=&q^{n-1}A_{n-1}(q)+(1+q^{2n-1})B_{n-1}(q)-q^nC_{n-1}(q),\\
 C_{n}(q)&=&q^{n-1}A_{n-1}(q)+q^{n-1}B_{n-1}(q)-(1+q^{2n-1})C_{n-1}(q).
\end{eqnarray*}
Let $p\geq 3$ be a prime and $s, n \in \mathbb{N}$. Consider the polynomial
\begin{eqnarray}
T_{p,s,n}(q):=\prod_{j=0}^n\prod_{k=1}^{p-1}(1-q^{pj+k})^s.
\end{eqnarray}
For $s=1$, Borwein \cite{Bor} obtained an asymptotic estimate for $\left\|T_{p,1,n}(q)\right\|_{|q|=1}=\sup_{|q|=1}|T_{p,1,n}(q)|$ when $p=2, 3, 5, 7, 11$ and $13$. However for $p>15$, he obtained an asymptotic lower bound for this quantity. 

It is clear that $d_{p,s,n}:=\;$deg$\;T_{p,s,n}=p(p-1)s(n+1)^2/2$. Define the coefficients $t_{i,p,s}$ by
\begin{eqnarray}\label{Tpq}
T_{p,s,n}(q)&:=&\sum\limits_{i=0}^{d_{p,s,n}}t_{i,p,s}q^{i}\notag\\&=&T_{0,p,s,n}(q^p)+qT_{1,p,s,n}(q^p)+\cdots+q^{p-1}T_{p-1,p,s,n}(q^p),
\end{eqnarray}
where $T_{i,p,s,n}(q)\in\mathbb{Z}[q]$. Given a polynomial $f(x)$, by $[x^{j}]f(x)$, we denote the coefficient of $x^{j}$ in $f(x)$. Let $a, d\in\mathbb{Z}$. In what follows, assume $p\mid a$ and let $S_{a,d,j,p}$ denote the arithmetic progression
\begin{eqnarray}
S_{a,d,j,p}:=\left\{am+d: m\in\mathbb{Z}\right\}, \;\mbox{with}\;d\equiv j \pmod{p}.
\end{eqnarray}
Put $a=p\ell$ and consider the following finite sum of coefficients over $S_{a,d,j,p}$:
\begin{eqnarray}\label{absums}
\sum_{\substack{i\geq 0\\i\in S_{p\ell,d,j,p}}}t_{i,p,s}=\sum_{\substack{i\geq 0\\i\in S_{p\ell,d,j,p}}}[q^{i-j}]T_{j,p,s,n}(q^p).
\end{eqnarray}
In \cite[pp. 98, Theorem 1]{Z}, Zaharescu obtained an asymptotic formula for the sum in (\ref{absums}) when $\ell$ is an odd prime $\leq n+1$ and $\ell\neq p$. As a result, when $\ell\leq c(n+1)$ with $0<c<1$, he showed positivity (resp. negativity) of the sum in (\ref{absums}) when $j=0$ (resp. $j\neq 0$) for large $n$. 

As Zaharescu points out in his paper, it is interesting to obtain positivity (or negativity) of the above sum for larger values of $\ell$. When $\ell\gg (n+1)^2$ (with implied constant larger than $1$), one can isolate each individual terms in the sum (\ref{absums}). We note here that the main disadvantage of his asymptotic formula is the error term, which is large. This forces him to choose a $\ell\ll n+1$ which ensures that the main term is bigger than the error term, thereby showing postivity or negativity of the sums.   

For $(p,s,\ell,j)=(3,1,n+1,0)$, Li \cite{Li} obtained an asymptotic formula for the sum in (\ref{absums}) using a new sieve technique discovered by himself and Wan \cite{Li-Wan}. If we denote by $t_{i,3,1}=a_{i}$, then Li proved that
\begin{theorem}[Li]\label{Li}
For $0\leq j\leq (n+1)$ we have
\begin{eqnarray*}
\sum_{\ell'=0}^n a_{3j+3\ell'(n+1)}=\dfrac{2\cdot 3^{n}}{n+1}(1+o(1)).
\end{eqnarray*}
In particular, we have
\begin{eqnarray*}
\sum_{\ell'=0}^n a_{3j+3\ell(n+1)}>0.
\end{eqnarray*}
\end{theorem}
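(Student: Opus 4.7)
The strategy is to apply the roots-of-unity filter---the core of the Li--Wan sieve in this setting---to isolate the arithmetic progression $i\equiv 3j\pmod{N}$ with $N:=3(n+1)$, and then evaluate $T_{3,1,n}$ explicitly at every $N$-th root of unity. Setting $\zeta:=e^{2\pi i/N}$, and noting that the indices $3j+3\ell'(n+1)$, $0\le \ell'\le n$, exhaust the residue class $3j\pmod{N}$ in $[0,3(n+1)^2]$ up to at most one boundary coefficient (namely $a_{3(n+1)^2}=1$ when $j=0$), the target sum equals
\[
\frac{1}{N}\sum_{k=0}^{N-1}\zeta^{-3jk}\,T_{3,1,n}(\zeta^k)\;+\;O(1).
\]

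The heart of the argument is to compute $T_{3,1,n}(\omega)$ for $\omega$ a primitive $M$-th root of unity with $M\mid N$. The key dichotomy: if $3\nmid M$, then $M\mid(n+1)$, so $\{0,1,\dots,n\}$ already contains a complete residue system mod~$M$ and one can solve $3j+r\equiv 0\pmod{M}$ with some $r\in\{1,2\}$ inside this range, forcing $T_{3,1,n}(\omega)=0$. Otherwise $M=3M'$ with $M'\mid(n+1)$; here no factor vanishes (since $3j+r\equiv r\not\equiv 0\pmod 3$), and setting $\eta:=\omega^3$ (a primitive $M'$-th root of unity) the $n+1$ values of $j$ fall into $(n+1)/M'$ complete blocks of size $M'$. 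The cyclotomic identity $\prod_{\eta'^{M'}=1}(1-\alpha\eta')=1-\alpha^{M'}$ then gives
\[
T_{3,1,n}(\omega)=\bigl[(1-\omega^{M'})(1-\omega^{2M'})\bigr]^{(n+1)/M'}=3^{(n+1)/M'},
\]
since $\omega^{M'}$ is a primitive cube root of unity, whence $(1-\omega^{M'})(1-\omega^{2M'})=3$.

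Grouping the $k$'s in the filter by the order $M=3M'$ of $\zeta^k$, the target sum reduces to
\[
\frac{1}{N}\sum_{M'\mid(n+1)}3^{(n+1)/M'}\,c_{3M'}(3j)\;+\;O(1),
\]
where $c_q(m)$ denotes the Ramanujan sum. The term $M'=1$ supplies the main contribution $3^{n+1}\cdot c_3(3j)/N=2\cdot 3^n/(n+1)$, since $c_3(3j)=2$. All remaining terms are controlled by the crude bound $|c_{3M'}(3j)|\le\phi(3M')\le 3M'$ combined with $3^{(n+1)/M'}\le 3^{(n+1)/2}$ for $M'\ge 2$, yielding a total error of $O\bigl(d(n+1)\,3^{(n+1)/2}\bigr)=o\bigl(3^n/(n+1)\bigr)$ because the divisor function $d(n+1)$ grows sub-polynomially.

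The step I expect to be most delicate is the evaluation of $T_{3,1,n}(\omega)$: recognizing the dichotomy governed by $3\mid M$ versus $3\nmid M$ (which is where the primality $p=3$ truly enters and uses $3\mid N$) and executing the cyclotomic factorization cleanly into complete blocks of size $M'$. Once this is in hand, the reduction to Ramanujan sums and the error bound are routine bookkeeping, and the positivity of the sum for large $n$ is an immediate consequence of the asymptotic.
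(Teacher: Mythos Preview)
Your argument is correct. Both your route and the paper's ultimately rest on character orthogonality over $\mathbb{Z}_{N}$ with $N=3(n+1)$ and on evaluating, for each character $\psi$ (equivalently, each $N$-th root of unity $\omega$), the quantity $\prod_{a\in D_3}(1-\psi(a))=T_{3,1,n}(\omega)$. Where the two diverge is in how this evaluation is carried out. You exploit the product form of $T_{3,1,n}$ directly: the dichotomy $3\mid M$ versus $3\nmid M$ together with the cyclotomic identity $\prod_{\eta'^{M'}=1}(1-\alpha\eta')=1-\alpha^{M'}$ gives the closed form $3^{(n+1)/M'}$ (or $0$) in a few lines. The paper instead passes through the Li--Wan sieve: it expands into subset sums indexed by $k$, invokes the symmetric-group action on $D_3^k$, the cycle-index polynomial $Z_k$, and the exponential generating function of Lemma~\ref{Egf}, only to recover the same values after summing over $k$ (indeed, $\sum_k(-1)^kF_\psi(\overline{X}_{3,k})/k!=\prod_{a\in D_3}(1-\psi(a))$). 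Your path is therefore more elementary and shorter for this specific problem; the paper's machinery is heavier but is packaged to handle all primes $p$ and powers $s$ uniformly (though your argument generalises too, since $T_{p,s,n}(\omega)=T_{p,1,n}(\omega)^s$). Two minor remarks: your error bound $O\bigl(d(n+1)\,3^{(n+1)/2}\bigr)$ can be sharpened to $3^{(n+1)/2}$ by using $\sum_{M'\mid(n+1)}\phi(3M')\le N$ instead of the crude $|c_{3M'}|\le 3M'$ term-by-term, matching the paper's Theorem~\ref{main1}; and your asymptotic yields positivity only for large $n$, so the ``in particular'' clause for all $n$ would still require a finite check (as the paper does for the companion cases).
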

Indeed, the error term in Li's asymptotic formula \cite[pp. 4, Theorem 1.5]{Li} is much better than Zaharescu's which enabled him to prove the positivity of the sum in Theorem \ref{Li}.

The purpose of this paper is to extend Li's results by obtaining asymptotic formula for the sums in (\ref{absums}) in the case $\ell=n+1$ for all $p, s, j$. As a consequence, we obtain positivity (or negativity) of the sums in (\ref{absums}) for large $n$. Thus, for $p=3,5$, we obtain asymptotic formula for the partial sums of coefficients involving polynomials in Conjectures \ref{bor1}-\ref{bor3}. This in turn shows that the sums are positive (or negative) for all $n>0$. (see Corollaries \ref{sumbor1}--\ref{sumbor3}). We also improve on the error terms in Li's and Zaharescu's asymptotic formula. Using a recent result of Borwein \cite{Bor}, we also obtain an asymptotic estimate for the maximum absolute coefficients of $T_{p,s,n}(q)$ only in the case $p=2, 3, 5, 7, 11, 13$; however for $p>15$ we obtain an asymptotic lower bound for the maximum absolute coefficients. 

This paper is organized as follows. In Section \ref{NCB} we introduce a few notations, conventions and do some basic counting. In Section \ref{MR} we state our main results. In Section \ref{LWs} we recall Li and Wan's \cite{Li-Wan} sieving principle and also establish a few basic results. Finally in Section \ref{Ps} we obtain the proofs of our main results.
\section{Acknowledgement}
The research of the first author was supported by grant SFB F50-06 of the Austrian Science Fund (FWF). 
The authors thank George Andrews, Peter Paule, Qing Xiang and Cai-Heng Li for their feedback. We also thank the anonymous referee for valuable suggestions and feedback.
\section{Notation, Conventions and Basic Counting}\label{NCB}
Let $n\in\mathbb{N}$ and $p\geq 3$ be a prime. Set $N_p=(n+1)p$ and $D_p=\{1,2,\ldots p-1, p+1,\ldots,2p-1,\ldots, pn+1,\ldots, pn+p-1\}$. We define the following: 
\begin{eqnarray*}
\mathscr{C}_{e,p,s}(j,n)&:=&\#\left\{\varprod_{i=1}^s V_i\subset D_p^s:\sum_{i=1}^s|V_i|\equiv 0 \;(\mbox{mod}\;2), \sum_{i=1}^s\sum\limits_{x_{v_i}\in V_i} x_{v_i}= j\right\},\notag\\
\mathscr{C}_{o,p,s}(j,n)&:=&\#\left\{\varprod_{i=1}^s V_i \subset D_p^s:\sum_{i=1}^s|V_i|\equiv 1\;(\mbox{mod}\;2), \sum_{i=1}^s\sum\limits_{x_{v_i}\in V_i} x_{v_i}= j\right\}.
\end{eqnarray*}
It is now apparent that
\begin{equation}\label{co}
 t_{j,p,s}=\mathscr{C}_{e,p,s}(j,n)-\mathscr{C}_{o,p,s}(j,n).
\end{equation}
We note that in the case $s=1$, $\mathscr{C}_{e,p,1}(j,n)$ (respectively $\mathscr{C}_{o,p,1}(j,n)$) counts the number of partitions of $j$ into an even (respectively odd) number of distinct non-multiples of $p$. 

As in \cite{Li}, we shift the problem to that of counting the size of certain subsets of the group $G=\mathbb{Z}_{N_p}$. We note that $G\setminus D_{p}$ is a subgroup of index $p$. Given $0\leq k_1, k_2,\cdots,k_s \leq |D_{p}|$ and $0\leq b < N_p$, define
\begin{eqnarray*}
M_{p,s,n}(k_1,k_2,\cdots,k_s;b):=\#\left\{\varprod_{i=1}^sV_i \subset D_p^s:|V_1|=k_1,\cdots,|V_s|=k_s, \sum_{i=1}^s\sum\limits_{x_{v_i}\in V_i} x_{v_i}\equiv b \;(\mbox{mod}\;N_p)\right\}, 
\end{eqnarray*}
and set
\begin{eqnarray*}
M_{p,s,n}(b)=\sum_{0\leq k_1,k_2,\cdots,k_s\leq |D_{p}|}(-1)^{k_1+k_2+\cdots+k_s}M_{p,s,n}(k_1,k_2,\cdots,k_s;b).
\end{eqnarray*}
From (\ref{absums}) and (\ref{co}), we see that if $b\equiv j\;(\mbox{mod}\;p)$ then the following are equivalent:
\begin{equation}\label{ps}
M_{p,s,n}(b)=\sum_{\substack{0\leq i\leq d_{p,s,n}\\i\in S_{N_{p},b,j,p}}}t_{i,p,s}=\sum_{0\leq \ell\leq B_{p,s}(b)}t_{N_p\ell+b,p,s}=\sum_{0\leq \ell\leq B_{p,s}(b)}[q^{b-j+\ell N_p}]T_{j,p,s,n}(q^p),
\end{equation}
where $B_{p,s}(b):=\lfloor(d_{p,s,n}-b)/N_p\rfloor$. For ease of notation, we will mostly use the second sum in (\ref{ps}) for $M_{p,s,n}(b)$. 

We next introduce a few more notations. Let $(x)_k:=x(x-1)(x-2)\cdots(x-k+1)$ denote the falling factorial. Let $\hat{G}$ be the set of complex-valued linear characters of $G$. By $\psi_{0}$, we denote the trivial character in $\hat{G}$. Let $X_{p,k}=D_p^k$ and $\overline{X}_{p,k}$ denote the subset of all tuples in $D_p^{k}$ with distinct coordinates. 
\section{Main results}\label{MR}
Our main results are below. 
\begin{theorem}\label{main1}
With $M_{p,s,n}(b)$ defined as in \emph{(\ref{ps})} and $b\in\mathbb{Z}_{N_p}$ we have
\begin{eqnarray*}
\left|M_{p,s,n}(b)-\dfrac{\varSigma_{p,s,n,b}}{n+1}\right|\leq p^{s(n+1)/2},
\end{eqnarray*}
where 
\begin{eqnarray*}
\varSigma_{p,s,n,b}=\left\{\begin{array}{cc}
(p-1)\cdot p^{s(n+1)-1},&\emph{if $p \mid b$,}\\
-p^{s(n+1)-1},&\emph{otherwise.}\end{array}\right.
\end{eqnarray*}
\end{theorem}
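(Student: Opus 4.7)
The plan is to apply character theory on the additive group $G=\mathbb{Z}_{N_p}$, exploiting that the index-$p$ subgroup $H=p\mathbb{Z}_{N_p}$ is precisely the complement of $D_p$. The main contribution to $M_{p,s,n}(b)$ will come from the $p$ characters in the annihilator $H^{\perp}$, and the error from the remaining characters.

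First, I would expand the indicator of the congruence $\sum_i\sum_{v\in V_i}v\equiv b\pmod{N_p}$ via orthogonality of characters and apply the factorization
\[
\sum_{V\subseteq D_p}(-1)^{|V|}\prod_{v\in V}\psi(v)=\prod_{v\in D_p}(1-\psi(v))
\]
independently in each of the $s$ coordinates to obtain
\[
M_{p,s,n}(b)=\frac{1}{N_p}\sum_{\psi\in\hat{G}}\overline{\psi(b)}\left(\prod_{v\in D_p}(1-\psi(v))\right)^{s}.
\]

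Second, I would evaluate the inner product. For $\psi=\psi_t$ write $e=\gcd(t,N_p)$ and $e_1=\gcd(t,n+1)$; then as $v$ runs over $G$ (resp.\ $H$), $\psi(v)$ hits each $(N_p/e)$-th (resp.\ $((n+1)/e_1)$-th) root of unity with multiplicity $e$ (resp.\ $e_1$). This yields the rational expression
\[
\prod_{v\in D_p}(x-\psi(v))=\frac{(x^{N_p/e}-1)^{e}}{(x^{(n+1)/e_1}-1)^{e_1}},
\]
and cancelling $(x-1)^{e_1}$ from top and bottom before setting $x=1$ shows that the product vanishes when $e>e_1$ and equals $(N_p/e)^{e}/((n+1)/e_1)^{e_1}=p^{e_1}$ when $e=e_1$.

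Third, I would decompose the character sum. The annihilator $H^{\perp}=\{\psi_{k(n+1)}:0\leq k\leq p-1\}$ is exactly the set of $\psi_t$ with $e_1=n+1$, and its non-trivial characters each contribute $p^{s(n+1)}\overline{\psi(b)}/N_p$; the standard root-of-unity sum $\sum_{k=1}^{p-1}e^{-2\pi ikb/p}$ equals $p-1$ when $p\mid b$ and $-1$ otherwise, which is exactly $\varSigma_{p,s,n,b}/(n+1)$. For every non-vanishing $\psi\notin H^{\perp}$, $e_1$ is a proper divisor of $n+1$, so $e_1\leq(n+1)/2$, and its contribution has modulus at most $p^{s(n+1)/2}/N_p$. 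Since there are fewer than $N_p$ such characters, the total error is at most $p^{s(n+1)/2}$, matching the theorem.

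The main obstacle I anticipate is the second step: setting up the rational expression correctly and extracting both the vanishing criterion $e=e_1$ and the explicit value $p^{e_1}$. With that identity in hand, the main-term extraction from $H^{\perp}$ and the bound on the remaining characters follow by routine manipulations.
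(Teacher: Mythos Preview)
Your proof is correct and takes a genuinely more direct route than the paper. The paper deploys the full Li--Wan sieve: it first writes $k_1!\cdots k_s!\,M_{p,s,n}(k_1,\dots,k_s;b)$ in terms of the sums $F_\psi(\overline{X}_{p,k_i})$ over tuples with distinct coordinates, expands each $F_\psi(\overline{X}_{p,k})$ via cycle-index polynomials $Z_k$ indexed by conjugacy classes of $S_k$ (Proposition~\ref{LWS}, Lemmas~\ref{Ftau}--\ref{Hpsi}), identifies their exponential generating functions (Corollary~\ref{speZ}, Lemma~\ref{Hfines}), and only after summing over $k_1,\dots,k_s$ with signs does it recover, implicitly, the value of $\prod_{v\in D_p}(1-\psi(v))$ for each character. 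You short-circuit all of this by observing at the outset that
\[
\sum_{V\subseteq D_p}(-1)^{|V|}\prod_{v\in V}\psi(v)=\prod_{v\in D_p}(1-\psi(v)),
\]
and then evaluating that product directly from the rational identity $\prod_{v\in D_p}(x-\psi(v))=(x^{N_p/e}-1)^{e}/(x^{(n+1)/e_1}-1)^{e_1}$. The two arguments are logically equivalent---the paper's computations in (\ref{Rr})--(\ref{Qr}) amount precisely to evaluating your product at $x=1$ in the three regimes $o(\psi)=p$, $p\mid o(\psi)>p$, and $p\nmid o(\psi)$---but your version is considerably shorter and needs no symmetric-group combinatorics. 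The paper's framework has the advantage of giving information on each $M_{p,s,n}(k_1,\dots,k_s;b)$ separately before the signed sum is taken, which could be useful elsewhere; for Theorem~\ref{main1} as stated, however, your argument is simply cleaner.
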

\begin{theorem}\label{main2}
For a fixed prime $p\geq 3$ and $b\in\mathbb{Z}_{N_p}$, define
\begin{eqnarray*}n_{p,s,b}:=\left\{\begin{array}{cc}
\inf\left\{n\in\mathbb{N}: (p-1)p^{s(n+1)/2-1}>n+1\right\},&\emph{if $p\mid b$,}\\
\inf\left\{n\in\mathbb{N}: p^{s(n+1)/2-1}>n+1\right\},&\emph{otherwise}.
\end{array}\right.
\end{eqnarray*}
Then for all $n\geq n_{p,s,b}$ we have
\begin{eqnarray*}
M_{p,s,n}(b)\left\{\begin{array}{cc}
>0,& \emph{if $p\mid b$,}\\
<0,&\emph{otherwise}.\end{array}\right.
\end{eqnarray*}
\end{theorem}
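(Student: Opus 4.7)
The plan is to derive Theorem \ref{main2} as a direct corollary of Theorem \ref{main1}. The latter tells us that $M_{p,s,n}(b)$ is very close to the explicit quantity $\Sigma_{p,s,n,b}/(n+1)$, and the sign of $\Sigma_{p,s,n,b}$ itself is precisely the sign we want (positive when $p \mid b$, negative otherwise). So the entire content of Theorem \ref{main2} is an elementary bookkeeping step: find the threshold where the main term dominates the error term $p^{s(n+1)/2}$.

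First I would split into two cases according to whether $p \mid b$. If $p \mid b$, then $\Sigma_{p,s,n,b} = (p-1) p^{s(n+1)-1}$ is positive, and Theorem \ref{main1} yields the lower bound
\begin{equation*}
M_{p,s,n}(b) \;\geq\; \frac{(p-1) p^{s(n+1)-1}}{n+1} - p^{s(n+1)/2}.
\end{equation*}
This lower bound is strictly positive exactly when $(p-1)\, p^{s(n+1)/2 - 1} > n+1$, which is the defining condition of $n_{p,s,b}$ in this case. Hence $M_{p,s,n}(b) > 0$ whenever $n \geq n_{p,s,b}$.

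If instead $p \nmid b$, then $\Sigma_{p,s,n,b} = -p^{s(n+1)-1}$ is negative, and Theorem \ref{main1} now gives the upper bound
\begin{equation*}
M_{p,s,n}(b) \;\leq\; -\frac{p^{s(n+1)-1}}{n+1} + p^{s(n+1)/2},
\end{equation*}
which is strictly negative precisely when $p^{s(n+1)/2-1} > n+1$; this is again the defining condition of $n_{p,s,b}$ in this case. Thus $M_{p,s,n}(b) < 0$ for all $n \geq n_{p,s,b}$.

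There is no substantial obstacle here once Theorem \ref{main1} is in hand; the only point that requires a moment of care is to verify that the defining inequalities for $n_{p,s,b}$ are the exact algebraic conditions under which the main term $|\Sigma_{p,s,n,b}|/(n+1)$ strictly exceeds the error bound $p^{s(n+1)/2}$, and this is a direct rearrangement. The real work of the paper is therefore concentrated in Theorem \ref{main1}; Theorem \ref{main2} follows from it in a few lines.
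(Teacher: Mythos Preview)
Your proposal is correct and follows essentially the same route as the paper: both split into the cases $p\mid b$ and $p\nmid b$, apply Theorem~\ref{main1}, and rearrange the inequality $|\varSigma_{p,s,n,b}|/(n+1)>p^{s(n+1)/2}$ into the defining condition for $n_{p,s,b}$.
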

For $p=3$, Li's theorem \cite[pp. 4, Prop. 1.6]{Li} shows that $M_{3,1,n}(b) (=M_{3,n}(b))>0$ when $b\equiv 0\;(\mbox{mod\;3})$ for all $n>0$. For $p=3$, when $b\not\equiv 0\;(\mbox{mod}\;3)$, we have
\begin{theorem}\label{main3}
Let $b\equiv 1, 2\;(\emph{mod}\;3)$ with $b\in\mathbb{Z}_{N_3}$. Then
\begin{eqnarray*}
M_{3,1,n}(b)=-\dfrac{3^{n}}{n+1}(1+o(1)).
\end{eqnarray*}
In particular, $M_{3,1,n}(b)<0$ for all $n>0$.
\end{theorem}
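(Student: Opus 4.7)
The asymptotic statement of Theorem \ref{main3} should be essentially a direct specialization of Theorem \ref{main1}. Setting $p = 3$, $s = 1$ and using $b \not\equiv 0 \pmod 3$, the quantity $\varSigma_{3,1,n,b}$ equals $-3^n$, so the bound in Theorem \ref{main1} reads
\[
\left| M_{3,1,n}(b) + \frac{3^n}{n+1} \right| \leq 3^{(n+1)/2}.
\]
Dividing through by $3^n/(n+1)$, the relative error is $O\bigl((n+1)\,3^{-(n-1)/2}\bigr) = o(1)$ as $n \to \infty$, which gives exactly $M_{3,1,n}(b) = -\tfrac{3^n}{n+1}(1+o(1))$. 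No further work is needed for the asymptotic.

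The strict negativity claim for \emph{all} $n > 0$ is slightly more delicate. For large $n$, Theorem \ref{main2} supplies $M_{3,1,n}(b) < 0$ whenever $n \geq n_{3,1,b} = \inf\{n \in \mathbb{N} : 3^{(n-1)/2} > n+1\}$, and a short numerical check of this inequality shows that the infimum equals $4$: one has $3^{3/2} \approx 5.196 > 5$ while $3^{1} = 3 < 4$. Thus only the three residual cases $n = 1, 2, 3$ need separate treatment, and this is the one piece of the argument I expect to be non-routine.

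My plan for these residual cases is to expand $T_{3,1,n}(q) = \prod_{j=0}^{n}(1 - q^{3j+1})(1 - q^{3j+2})$ explicitly (or, equivalently, to apply a few steps of the recursions for $A_n(q), B_n(q), C_n(q)$ recalled in the introduction), group its coefficients $t_{i,3,1}$ by residue class modulo $N_3 = 3(n+1)$, and verify that every sum $M_{3,1,n}(b)$ with $b \equiv 1, 2 \pmod 3$ is negative. A direct computation for $n = 1$, for instance, yields $M_{3,1,1}(b) \in \{-1, -2, -2, -1\}$ for $b = 1, 4, 2, 5$ respectively, confirming the claim in that case; the analogous enumerations for $n = 2$ and $n = 3$ are finite but longer, and can be carried out either by hand or by a short computer-algebra computation. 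Combining these three verifications with Theorem \ref{main2} for $n \geq 4$ completes the proof of Theorem \ref{main3}.
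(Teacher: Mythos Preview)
Your proposal is correct and follows essentially the same route as the paper: the asymptotic is obtained by specializing Theorem~\ref{main1} to $p=3$, $s=1$, the threshold $n_{3,1,b}=4$ comes from Theorem~\ref{main2}, and the remaining cases $n=1,2,3$ are handled by direct computation. The only difference is that the paper also remarks that Wang's proof of the First Borwein Conjecture gives $M_{3,1,n}(b)<0$ for all $n>0$ immediately, bypassing the small-case check; your explicit verification (which you carried out correctly for $n=1$) is exactly the ``direct computation'' the paper alludes to.
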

\begin{theorem}\label{main4}
For $p=3, s=2$ and $b\in\mathbb{Z}_{N_3}$ we have
\begin{eqnarray*}
M_{3,2,n}(b)=\left\{\begin{array}{ccc}
\dfrac{2\cdot 3^{2n+1}}{n+1}(1+o(1)),&\emph{if $3\mid b$},\\
\mbox{}&\\
-\dfrac{3^{2n+1}}{n+1}(1+o(1)), &\emph{otherwise}.
\end{array}\right.
\end{eqnarray*}
In particular, $M_{3,2,n}(b)>0$ \emph{(}resp. $M_{3,2,n}(b)<0$\emph{)} when $b\equiv 0 \pmod 3$ \emph{(}resp. $b\not\equiv 0 \pmod 3$\emph{)} for all $n>0$.
\end{theorem}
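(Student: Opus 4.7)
My plan is to derive Theorem \ref{main4} as an essentially immediate corollary of Theorem \ref{main1}, specialized to $p=3$ and $s=2$. Plugging these values into the general bound yields
\begin{equation*}
\left|M_{3,2,n}(b) - \frac{\varSigma_{3,2,n,b}}{n+1}\right| \leq 3^{n+1},
\end{equation*}
where $\varSigma_{3,2,n,b} = 2\cdot 3^{2n+1}$ if $3\mid b$ and $\varSigma_{3,2,n,b}=-3^{2n+1}$ otherwise, since $(p-1)p^{s(n+1)-1} = 2\cdot 3^{2(n+1)-1}=2\cdot 3^{2n+1}$ and $p^{s(n+1)/2}=3^{n+1}$.

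Next, I would convert this into the stated asymptotic by dividing out the main term. The ratio of error to main term is
\begin{equation*}
\frac{3^{n+1}}{3^{2n+1}/(n+1)} \;=\; \frac{n+1}{3^{n}} \;\longrightarrow\; 0
\end{equation*}
as $n\to\infty$, so in both cases
\begin{equation*}
M_{3,2,n}(b) = \frac{\varSigma_{3,2,n,b}}{n+1}\bigl(1+o(1)\bigr),
\end{equation*}
which matches the two formulas in the theorem statement.

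For the sign conclusion \emph{for all} $n>0$, I would verify the strict inequality $|\varSigma_{3,2,n,b}| > (n+1)\cdot 3^{n+1}$ at every $n\geq 1$. This reduces to showing $2\cdot 3^{n} > n+1$ (when $3\mid b$) and $3^{n} > n+1$ (when $3\nmid b$); both follow from an elementary induction starting at $n=1$, since at $n=1$ we have $3>2$ and $6>2$, and the left-hand side triples while the right-hand side only increases by $1$ at each step. Combined with the bound above, this forces $M_{3,2,n}(b)$ to carry the sign of $\varSigma_{3,2,n,b}$.

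There is no genuine obstacle here; the work was already done in proving Theorem \ref{main1}. The only item requiring minor care is the finite range $n\geq 1$ for the strict sign conclusion, which one checks directly rather than via an asymptotic comparison, so I would present that verification explicitly rather than invoking Theorem \ref{main2} (whose thresholds $n_{3,2,b}$ one would anyway compute to be $1$ in this case).
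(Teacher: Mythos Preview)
Your proposal is correct and follows essentially the same route as the paper: both deduce the asymptotic directly from Theorem~\ref{main1} with $p=3$, $s=2$, and both reduce the sign claim for all $n\geq 1$ to the elementary inequalities $2\cdot 3^{n}>n+1$ and $3^{n}>n+1$. The only cosmetic difference is that the paper phrases the second step as an application of Theorem~\ref{main2}, while you carry out the same inequality check directly.
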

\begin{theorem}\label{main5}
For $p=5, s=1$ and $b\in\mathbb{Z}_{N_5}$ we have
\begin{eqnarray*}
M_{5,1,n}(b)=\left\{\begin{array}{ccc}
\dfrac{4\cdot 5^{n}}{n+1}(1+o(1)),&\emph{if $5\mid b$},\\
\mbox{}&\\
-\dfrac{5^{n}}{n+1}(1+o(1)), &\emph{otherwise}.
\end{array}\right.
\end{eqnarray*}
In particular, $M_{5,1,n}(b)>0$ \emph{(}resp. $M_{5,1,n}(b)<0$\emph{)} when $b\equiv 0 \pmod 5$ \emph{(}resp. $b\not\equiv 0 \pmod 5$\emph{)} for all $n>0$.
\end{theorem}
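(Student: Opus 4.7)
The plan is to deduce Theorem~\ref{main5} directly from the general estimate of Theorem~\ref{main1} specialized to $p=5$, $s=1$, combined with the threshold criterion of Theorem~\ref{main2} and a small-case computation for the two residual low-degree cases.

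First I would apply Theorem~\ref{main1} with $p=5$ and $s=1$, which yields
\[
\left|M_{5,1,n}(b) - \frac{\varSigma_{5,1,n,b}}{n+1}\right| \leq 5^{(n+1)/2},
\]
with $\varSigma_{5,1,n,b} = 4\cdot 5^{n}$ when $5 \mid b$ and $\varSigma_{5,1,n,b} = -5^{n}$ otherwise. In both cases the putative main term has order $5^{n}/(n+1)$, and the ratio of the error bound to the absolute value of the main term is $(n+1)\cdot 5^{-(n-1)/2}$, which tends to zero as $n\to\infty$. This immediately produces the asymptotic formula $M_{5,1,n}(b) = \varSigma_{5,1,n,b}/(n+1)\cdot(1+o(1))$ in each of the two cases.

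Next, to establish the strict sign statement for every $n > 0$, I would invoke Theorem~\ref{main2}. When $5 \mid b$ the criterion $4\cdot 5^{(n-1)/2} > n+1$ holds already at $n=1$ (with $4 > 2$) and \emph{a fortiori} for all $n \geq 1$, giving $M_{5,1,n}(b) > 0$ for every $n > 0$. When $5 \nmid b$ the criterion $5^{(n-1)/2} > n+1$ first holds at $n=3$ (since $5 > 4$), so Theorem~\ref{main2} settles the sign for $n \geq 3$ but leaves the two cases $n = 1$ and $n = 2$ open.

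The main (and only) remaining obstacle is therefore this final computational verification for the low-degree cases. For $n = 1$ one expands $T_{5,1,1}(q) = \prod_{k=1}^{4}(1-q^{k})(1-q^{5+k})$, a polynomial of degree $40$, forms the eight sums $\sum_{\ell \geq 0}[q^{10\ell + b}]T_{5,1,1}(q)$ for $b \in \{1,2,3,4,6,7,8,9\}$, and checks that each is strictly negative. The $n=2$ case is handled analogously, working with $N_5 = 15$ and a polynomial of degree $90$. These are finite, routine calculations that complete the proof.
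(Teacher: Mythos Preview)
Your proposal is correct and follows essentially the same approach as the paper's own proof: specialize Theorem~\ref{main1} to $p=5$, $s=1$ to obtain the asymptotic, then invoke Theorem~\ref{main2}, noting that the inequality $4\cdot 5^{(n-1)/2}>n+1$ holds for all $n\geq 1$ while $5^{(n-1)/2}>n+1$ only kicks in at $n=3$, leaving $n=1,2$ to a direct check. The only difference is cosmetic---you spell out the small-case verification in slightly more detail than the paper does.
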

In view of (\ref{Tpq}), we immediately deduce the following from Theorem \ref{main1}:
\begin{cor}
For $p\geq 3$ and $b\in\mathbb{Z}_{N_p}$, let $b\equiv j\;(\emph{mod}\;p)$. Then we have
\begin{eqnarray*}
\sum_{0\leq \ell\leq B_{p,s}(b)} t_{b+\ell N_p,p,s}=\sum_{0\leq \ell\leq B_{p,s}(b)} [q^{b-j+\ell N_p}]T_{j,p,s,n}(q^p)=\dfrac{\varSigma_{p,s,n,b}}{n+1}(1+o(1)),
\end{eqnarray*}
where $\varSigma_{p,s,n,b}$ is as in Theorem \emph{\ref{main1}} and $B_{p,s}(b)=\lfloor(d_{p,s,n}-b)/N_p\rfloor.$
\end{cor}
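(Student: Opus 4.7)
The plan is to derive the corollary directly from Theorem \ref{main1} together with the identification (\ref{ps}); no new sieve argument or character sum estimate is needed. First, I note that the chain
\[
M_{p,s,n}(b) = \sum_{0 \leq \ell \leq B_{p,s}(b)} t_{b+\ell N_p, p, s} = \sum_{0 \leq \ell \leq B_{p,s}(b)} [q^{b-j+\ell N_p}]\, T_{j,p,s,n}(q^p)
\]
is precisely the content of (\ref{ps}) under the hypothesis $b \equiv j \pmod{p}$. Hence the first equality in the corollary is free, and the task reduces to showing that $M_{p,s,n}(b) = (\varSigma_{p,s,n,b}/(n+1))(1+o(1))$.

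Next I would apply Theorem \ref{main1}, which supplies the additive estimate
\[
\left| M_{p,s,n}(b) - \frac{\varSigma_{p,s,n,b}}{n+1} \right| \leq p^{s(n+1)/2}.
\]
To upgrade this to the multiplicative $(1+o(1))$ form, it suffices to check that the error is of strictly smaller order than the main term. Since $|\varSigma_{p,s,n,b}| \geq p^{s(n+1)-1}$ in both branches of the definition, the ratio of the error bound to $|\varSigma_{p,s,n,b}/(n+1)|$ is dominated by
\[
\frac{(n+1)\, p^{s(n+1)/2}}{p^{s(n+1)-1}} \;=\; \frac{p\,(n+1)}{p^{s(n+1)/2}},
\]
which tends to $0$ as $n \to \infty$ because $p\geq 3$ and $s\geq 1$ force the denominator to grow exponentially in $n$ while the numerator grows only linearly. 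Dividing the bound from Theorem \ref{main1} by $\varSigma_{p,s,n,b}/(n+1)$ yields the desired asymptotic.

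There is essentially no obstacle here; the corollary is a direct translation of Theorem \ref{main1} into asymptotic language, which is why the excerpt prefaces it with ``we immediately deduce.'' The only minor subtlety is that $\varSigma_{p,s,n,b}$ is negative when $p \nmid b$, so one interprets the symbol $(1+o(1))$ in the usual multiplicative sense (which is legitimate for nonzero main terms of either sign); the preceding estimate also guarantees that the main term is in particular nonzero for all sufficiently large $n$, so dividing by it is valid.
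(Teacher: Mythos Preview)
Your proposal is correct and matches the paper's intent: the corollary is stated as an immediate consequence of Theorem~\ref{main1} and the identification (\ref{ps}), and you have supplied exactly the routine verification that the additive error $p^{s(n+1)/2}$ is $o(|\varSigma_{p,s,n,b}|/(n+1))$. There is nothing to add.
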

In particular, noting the fact that the polynomials $T_{j,p,s,n}(q)$ are the polynomials in the first three Borwein conjectures for suitable choices of $j, p$ and $s$ we have, in view of Theorems \ref{main3}-\ref{main5} the following:
\begin{cor}\label{sumbor1}
For $p=3, s=1$ and a fixed $b\in\mathbb{Z}_{N_3}$, let $b=3u+j$ be such that $j\equiv 1, 2\;(\emph{mod}\;3)$. Then we have
\begin{eqnarray*}
0>\sum_{0\leq \ell\leq B_{3,1}(b)} t_{3u+1+\ell N_3,3,1}=\sum_{0\leq \ell\leq B_{3,1}(b)} [q^{3u+\ell N_3}]B_{n}(q^3)=-\dfrac{3^{n}}{n+1}(1+o(1)),\\
0>\sum_{0\leq \ell\leq B_{3,1}(b)} t_{3u+2+\ell N_3,3,1}=\sum_{0\leq \ell\leq B_{3,1}(b)} [q^{3u+\ell N_3}]C_{n}(q^3)=-\dfrac{3^{n}}{n+1}(1+o(1)).
\end{eqnarray*}
\end{cor}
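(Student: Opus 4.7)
The plan is to derive Corollary \ref{sumbor1} as a direct specialization of Theorem \ref{main3} via the decomposition (\ref{Tpq}). The first step is to observe that for $p=3, s=1$, the polynomial
\[T_{3,1,n}(q) = \prod_{j=0}^n (1-q^{3j+1})(1-q^{3j+2})\]
is, after the reindexing $j \mapsto j-1$, precisely the polynomial $\prod_{j=1}^{n+1}(1-q^{3j-2})(1-q^{3j-1})$ whose sign decomposition $A(q^3) - qB(q^3) - q^2C(q^3)$ appears in Conjecture \ref{bor1}. Matching this decomposition against (\ref{Tpq}) identifies the components $T_{1,3,1,n}(q^3)$ and $T_{2,3,1,n}(q^3)$ (up to the sign and indexing conventions adopted in the corollary statement) with the polynomials $B_n(q^3)$ and $C_n(q^3)$ of Conjecture \ref{bor1}, respectively.

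Next, for $b = 3u+1$ with $b \in \mathbb{Z}_{N_3}$, the identity (\ref{ps}) yields
\[M_{3,1,n}(b) = \sum_{0 \leq \ell \leq B_{3,1}(b)} t_{3u+1+\ell N_3,\, 3,1} = \sum_{0 \leq \ell \leq B_{3,1}(b)} [q^{3u+\ell N_3}] T_{1,3,1,n}(q^3),\]
which via the identification above matches the coefficient sum of $B_n(q^3)$ displayed in the corollary. Theorem \ref{main3} then supplies both the asymptotic $-\tfrac{3^n}{n+1}(1+o(1))$ and the strict negativity for all $n > 0$, giving the first line. The second line follows verbatim with $b = 3u+2$, $T_{2,3,1,n}$, and $C_n$ in place of $b = 3u+1$, $T_{1,3,1,n}$, and $B_n$.

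All substantive analytic content has already been developed in Theorem \ref{main3}, which itself rests on the explicit error bound of Theorem \ref{main1}; what remains for the corollary is purely bookkeeping. The sole subtlety, and what I would treat as the main (mild) obstacle, is reconciling conventions: the minus signs present in the Borwein decomposition $A - qB - q^2 C$ must be absorbed into the definitions of $T_{1,3,1,n}$ and $T_{2,3,1,n}$, and the $n$ versus $n+1$ index shift between the paper's product (with $n+1$ blocks) and the original Borwein product (with $n$ blocks) must be properly accounted for. Once these conventions are pinned down, the corollary is immediate from Theorem \ref{main3}.
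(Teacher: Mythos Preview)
Your proposal is correct and matches the paper's approach: the paper does not supply a separate proof of Corollary \ref{sumbor1} but simply observes that the polynomials $T_{j,p,s,n}(q)$ specialize to the Borwein polynomials and then invokes Theorem \ref{main3}, which is exactly what you do via (\ref{ps}). Your flagging of the sign convention and the $n$ versus $n+1$ index shift is apt---the paper glosses over these bookkeeping points---but the substance of the argument is identical.
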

\begin{cor}\label{sumbor2}
For $p=3, s=2$ and a fixed $b\in\mathbb{Z}_{N_3}$, let $b=3u+j$ be such that $j\equiv 0, 1, 2\;(\emph{mod}\;3)$. Then we have
\begin{eqnarray*}
0<\sum_{0\leq \ell\leq B_{3,2}(b)} t_{3u+\ell N_3,3,2}=\sum_{0\leq \ell\leq B_{3,2}(b)} [q^{3u+\ell N_3}]\alpha_{n}(q^3)=\dfrac{2\cdot 3^{2n+1}}{n+1}(1+o(1)),\\
0>\sum_{0\leq \ell\leq B_{3,2}(b)} t_{3u+1+\ell N_3,3,2}=\sum_{0\leq \ell\leq B_{3,2}(b)} [q^{3u+\ell N_3}]\beta_{n}(q^3)=-\dfrac{3^{2n+1}}{n+1}(1+o(1)),\\
0>\sum_{0\leq \ell\leq B_{3,2}(b)} t_{3u+2+\ell N_3,3,2}=\sum_{0\leq \ell\leq B_{3,2}(b)} [q^{3u+\ell N_3}]\gamma_{n}(q^3)=-\dfrac{3^{2n+1}}{n+1}(1+o(1)).
\end{eqnarray*}
\end{cor}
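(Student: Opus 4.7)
The plan is to deduce Corollary \ref{sumbor2} directly from Theorem \ref{main4} by matching the $p$-adic decomposition (\ref{Tpq}) of $T_{3,2,n}(q)$ with the decomposition appearing in Conjecture \ref{bor2}. No new analytic input is needed; the work is essentially a bookkeeping one.

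First I would reindex the defining product by $j\mapsto j-1$:
\[
T_{3,2,n}(q)=\prod_{j=0}^n(1-q^{3j+1})^2(1-q^{3j+2})^2=\prod_{j=1}^{n+1}(1-q^{3j-2})^2(1-q^{3j-1})^2.
\]
Comparing with Conjecture \ref{bor2} (applied at $n+1$ in place of $n$) gives the decomposition
\[
T_{3,2,n}(q)=\alpha_{n+1}(q^3)-q\,\beta_{n+1}(q^3)-q^2\,\gamma_{n+1}(q^3),
\]
and matching with (\ref{Tpq}) identifies
\[
T_{0,3,2,n}(q)=\alpha_{n+1}(q),\qquad T_{1,3,2,n}(q)=-\beta_{n+1}(q),\qquad T_{2,3,2,n}(q)=-\gamma_{n+1}(q).
\]

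Next, for a given $b=3u+j\in\mathbb{Z}_{N_3}$ with $j\in\{0,1,2\}$, the identity (\ref{ps}) yields
\[
M_{3,2,n}(b)=\sum_{0\leq\ell\leq B_{3,2}(b)}[q^{3u+\ell N_3}]\,T_{j,3,2,n}(q^3).
\]
Substituting the identifications above, Theorem \ref{main4} translates verbatim into the three estimates of Corollary \ref{sumbor2}: in the $\alpha$-case $j=0$ one has $T_{0,3,2,n}=\alpha_{n+1}$, so the sum equals $M_{3,2,n}(3u)=\tfrac{2\cdot 3^{2n+1}}{n+1}(1+o(1))>0$; in the $\beta$- and $\gamma$-cases $j\in\{1,2\}$ the minus sign absorbed in $T_{j,3,2,n}$ combines with the negative asymptotic of $M_{3,2,n}$ from Theorem \ref{main4} to produce the stated main terms $-\tfrac{3^{2n+1}}{n+1}(1+o(1))$ with the correct signs on the partial sums. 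The \emph{for all $n>0$} strict inequality inherits directly from the corresponding clause of Theorem \ref{main4}. The only step demanding care is the index shift $n\mapsto n+1$ between the product form of $T_{3,2,n}$ and the definition of $\alpha_n,\beta_n,\gamma_n$, together with the sign convention in Conjecture \ref{bor2}; beyond this, no genuine obstacle arises and the corollary is immediate.
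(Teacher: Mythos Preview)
Your approach coincides with the paper's: the corollary is presented there without separate proof, as an immediate consequence of Theorem~\ref{main4} via (\ref{ps}) and the identification of $T_{j,3,2,n}$ with the polynomials of Conjecture~\ref{bor2}. You are in fact more explicit than the paper about the index shift $n\mapsto n+1$.

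There is, however, a slip in your final paragraph. From $T_{1,3,2,n}=-\beta_{n+1}$ and $M_{3,2,n}(b)=-\tfrac{3^{2n+1}}{n+1}(1+o(1))$ for $3\nmid b$, the two minus signs \emph{cancel} rather than ``combine'': one gets
\[
\sum_{0\le\ell\le B_{3,2}(b)}[q^{\,3u+\ell N_3}]\beta_{n+1}(q^3)\;=\;-\,M_{3,2,n}(b)\;=\;+\,\frac{3^{2n+1}}{n+1}\bigl(1+o(1)\bigr)\;>\;0,
\]
and likewise for $\gamma_{n+1}$. The outer equality in each displayed line of the corollary (the $t$-sum equals the stated asymptotic) is exactly Theorem~\ref{main4} and is fine; but the middle expression as printed carries a sign (and index) slip, and your sentence papers over it rather than resolving it. If the displayed equalities are to hold literally, the $\beta$ and $\gamma$ sums must carry an extra minus sign, or equivalently be written in terms of $T_{1,3,2,n}$ and $T_{2,3,2,n}$.
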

\begin{cor}\label{sumbor3}
For $p=5, s=1$ and $b\in\mathbb{Z}_{N_5}$, let $b\equiv j\;(\emph{mod}\;5)$. Then we have
\begin{eqnarray*}
\sum_{0\leq \ell\leq B_{5,1}(b)} t_{b+\ell N,5,1}=\sum_{0\leq \ell\leq B_{5,1}(b)} [q^{b-j+\ell N}]T_{j,5,1,n}(q^5)=\left\{\begin{array}{ccc}
\dfrac{4\cdot 5^{n}}{n+1}(1+o(1))>0,&\emph{if $5|b$},\\
\mbox{}&\\
-\dfrac{5^{n}}{n+1}(1+o(1))<0, &\emph{otherwise}.
\end{array}\right.
\end{eqnarray*}
where $T_{0,5,1,n}(q)=\nu_n(q),\; T_{1,5,1,n}(q)=\phi_n(q), \;T_{2,5,1,n}(q)=\chi_n(q),\; T_{3,5,1,n}(q)=\psi_n(q),\; T_{4,5,1,n}(q)=\omega_n(q)$ are the polynomials in Conjecture \emph{\ref{bor3}}.
\end{cor}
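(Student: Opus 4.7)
The plan is to derive Corollary \ref{sumbor3} as an essentially immediate consequence of Theorem \ref{main5}, combined with the dictionary established in (\ref{Tpq}) and (\ref{ps}). First, equation (\ref{ps}) asserts that, for every $b\in\mathbb{Z}_{N_5}$ with $b\equiv j\pmod 5$,
\[
M_{5,1,n}(b)=\sum_{0\leq \ell\leq B_{5,1}(b)} t_{b+\ell N_5,5,1}=\sum_{0\leq \ell\leq B_{5,1}(b)} [q^{b-j+\ell N_5}]T_{j,5,1,n}(q^5),
\]
so the two sums in the statement of the corollary are literally the quantity $M_{5,1,n}(b)$.

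Second, Theorem \ref{main5} supplies the asymptotic $M_{5,1,n}(b) = (4\cdot 5^n/(n+1))(1+o(1))$ when $5\mid b$ and $M_{5,1,n}(b) = -(5^n/(n+1))(1+o(1))$ otherwise, together with the strict positivity (respectively negativity) of $M_{5,1,n}(b)$ for every $n>0$. Substituting into the identities above gives the displayed right-hand sides of the corollary as well as the sign inequalities.

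Third, I would match the notation $T_{j,5,1,n}(q)$ to Borwein's $\nu_n, \phi_n, \chi_n, \psi_n, \omega_n$. By definition $T_{5,1,n}(q) = \prod_{j=0}^n\prod_{k=1}^{4}(1-q^{5j+k})$, and after the reindexing $j\mapsto j-1$ this becomes $\prod_{j=1}^{n+1}(1-q^{5j-4})(1-q^{5j-3})(1-q^{5j-2})(1-q^{5j-1})$, the product appearing in Conjecture \ref{bor3}. Reading off residue classes in the decomposition (\ref{Tpq}) then identifies $T_{0,5,1,n}, T_{1,5,1,n}, T_{2,5,1,n}, T_{3,5,1,n}, T_{4,5,1,n}$ with $\nu_n, \phi_n, \chi_n, \psi_n, \omega_n$ respectively (up to the sign conventions used in the conjecture).

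The only subtle point, and thus the main obstacle, is careful bookkeeping: one must align the index shift between $\prod_{j=0}^n$ and $\prod_{j=1}^n$, and must track the sign discrepancy between the all-plus decomposition in (\ref{Tpq}) and the alternating-sign decomposition in Conjecture \ref{bor3}. Once these are matched correctly, the corollary follows with no additional work beyond citing Theorem \ref{main5}.
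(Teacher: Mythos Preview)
Your proposal is correct and follows essentially the same approach as the paper: the corollary is stated as an immediate consequence of Theorem~\ref{main5} together with the identifications (\ref{Tpq}) and (\ref{ps}), with no additional argument given. You are in fact more careful than the paper in flagging the index shift between $\prod_{j=0}^{n}$ in the definition of $T_{5,1,n}$ and $\prod_{j=1}^{n}$ in Conjecture~\ref{bor3}, as well as the sign conventions.
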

\begin{theorem}\label{Borw1}
Let $p=2, 3, 5, 7, 11, 13$ and $n$ be sufficiently large. Then we have
\begin{eqnarray*}
\max_{i}|t_{i,p,s}|=p^{s(n+1)+O(\log n)}.
\end{eqnarray*}
\end{theorem}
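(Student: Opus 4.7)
My plan is to establish matching upper and lower bounds of the form $p^{s(n+1)+O(\log n)}$ on $\max_i |t_{i,p,s}|$. The upper bound will come from Borwein's sup-norm estimate \cite{Bor} combined with Cauchy's coefficient bound, while the lower bound will come directly from Theorem \ref{main1} together with a pigeonhole step.

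For the upper bound, I would first note that by definition $T_{p,s,n}(q) = T_{p,1,n}(q)^s$, so $\|T_{p,s,n}\|_{|q|=1} = \|T_{p,1,n}\|_{|q|=1}^s$. By Cauchy's integral formula applied on $|q|=1$, each coefficient of $T_{p,s,n}$ satisfies $|t_{i,p,s}| \leq \|T_{p,s,n}\|_{|q|=1}$. Borwein's theorem in \cite{Bor}, valid precisely for $p \in \{2,3,5,7,11,13\}$, provides $\|T_{p,1,n}\|_{|q|=1} = p^{(n+1)+O(\log n)}$. Raising to the $s$th power yields $\max_i |t_{i,p,s}| \leq p^{s(n+1)+O(\log n)}$.

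For the lower bound, I would invoke Theorem \ref{main1} with any $b \in \mathbb{Z}_{N_p}$ satisfying $p \mid b$ (say $b=0$), which gives
\[
M_{p,s,n}(0) \geq \frac{(p-1)\,p^{s(n+1)-1}}{n+1} - p^{s(n+1)/2}.
\]
For $n$ sufficiently large, the right-hand side is bounded below by $c\,p^{s(n+1)}/(n+1)$ for some absolute $c > 0$. On the other hand, $M_{p,s,n}(0) = \sum_{0 \leq \ell \leq B_{p,s}(0)} t_{\ell N_p,p,s}$ is a sum of $B_{p,s}(0)+1 = O(n)$ coefficients, since $d_{p,s,n} = p(p-1)s(n+1)^2/2$ and $N_p = p(n+1)$. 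The triangle inequality then forces
\[
\max_i |t_{i,p,s}| \;\geq\; \frac{|M_{p,s,n}(0)|}{B_{p,s}(0)+1} \;\geq\; \frac{c'\,p^{s(n+1)}}{(n+1)^2},
\]
which after taking $\log_p$ yields $\log_p \max_i |t_{i,p,s}| \geq s(n+1) - 2\log_p(n+1) - O(1)$. This matches the upper bound up to $O(\log n)$ in the exponent, proving the theorem.

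The main obstacle I anticipate is verifying that Borwein's estimate in \cite{Bor} really is tight enough to give an error of $O(\log n)$ in the exponent rather than something larger; this requires a careful reading of his asymptotic formulas and checking that the implied constants depend only on $p$ and $s$ (both fixed, with $n \to \infty$). Once that is in hand, the lower-bound argument is essentially bookkeeping on top of Theorem \ref{main1}.
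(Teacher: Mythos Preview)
Your proposal is correct for $p\geq 3$ and the upper bound matches the paper's exactly, but your lower bound takes a genuinely different route. The paper obtains \emph{both} directions from Borwein's sup-norm asymptotic: after Cauchy gives $\max_i|t_{i,p,s}|\leq \|T_{p,s,n}\|_{|q|=1}$, the paper simply notes the reverse triangle-inequality bound $\|T_{p,s,n}\|_{|q|=1}\leq (d_{p,s,n}+1)\max_i|t_{i,p,s}|$, and since $d_{p,s,n}=O(n^2)$ this already sandwiches $\log_p\max_i|t_{i,p,s}|$ within $O(\log n)$ of $\log_p\|T_{p,s,n}\|_{|q|=1}$, which Borwein's two-sided result pins down as $s(n+1)+O(1/n)$. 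Your lower bound instead feeds Theorem~\ref{main1} through a pigeonhole step over the $O(n)$ terms in $M_{p,s,n}(0)$; this is a pleasant alternative that recycles the paper's own machinery and has the bonus of working for every prime $p\geq 3$, not just the six in Borwein's list. The cost is that Theorem~\ref{main1} is only established for $p\geq 3$, so your argument as written leaves a gap at $p=2$; there you would need to revert to the paper's triangle-inequality lower bound. As for your stated obstacle, it is not one: Borwein's estimate in \cite{Bor} gives an $O(1/n)$ error term in the exponent, comfortably within the $O(\log n)$ you need.
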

\begin{theorem}\label{Borw2}
Let $p>15$ and $n$ be sufficiently large. Then we have
\begin{eqnarray*}
\max_{i}|t_{i,p,s}|\gtrsim \dfrac{(1.219\cdots)^{s(p-1)(n+1)}}{sp^2n^2}>\dfrac{p^{s(n+1)-2}}{sn^2}.
\end{eqnarray*}
\end{theorem}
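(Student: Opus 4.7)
My plan is to derive the lower bound from Borwein's asymptotic estimate for the sup-norm $\|T_{p,1,n}\|_{|q|=1}$ in \cite{Bor}, combined with the elementary coefficient-extraction inequality. Specifically, for any polynomial $f(q)=\sum_{i=0}^{d}c_iq^i$, evaluating at any point on $|q|=1$ and applying the triangle inequality gives
$$|f(q)| \le \sum_{i=0}^d |c_i| \le (d+1)\max_i |c_i|,$$
so $\max_i |c_i| \ge \|f\|_{|q|=1}/(d+1)$.

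I would proceed in three steps. First, apply this inequality to $f=T_{p,s,n}(q)$, using $d_{p,s,n}+1 = sp(p-1)(n+1)^2/2 + 1 \le sp^2n^2$ for $n$ sufficiently large. Second, exploit the multiplicative structure $T_{p,s,n}(q) = T_{p,1,n}(q)^s$, which immediately gives $\|T_{p,s,n}\|_{|q|=1} = \|T_{p,1,n}\|_{|q|=1}^s$. Third, invoke Borwein's lower bound: for $p>15$ he showed $\|T_{p,1,n}\|_{|q|=1} \gtrsim (1.219\cdots)^{(p-1)(n+1)}$ as $n\to\infty$. Concatenating these three steps yields
$$\max_i |t_{i,p,s}| \ge \frac{\|T_{p,1,n}\|_{|q|=1}^s}{d_{p,s,n}+1} \gtrsim \frac{(1.219\cdots)^{s(p-1)(n+1)}}{sp^2n^2},$$
which is the first inequality.

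For the second inequality, it suffices after cancellation to verify $(1.219\cdots)^{p-1} > p$ uniformly for all primes $p>15$, that is, for $p \ge 17$. At $p=17$ one has $(1.219)^{16} \approx 23.8 > 17$, and the function $(p-1)\log(1.219\cdots) - \log p$ is strictly increasing in $p$ once $p > 1/\log(1.219\cdots) \approx 5.05$. So the inequality persists for every prime $p\ge 17$, and raising to the $s(n+1)$ power and dividing by $sp^2n^2$ yields $p^{s(n+1)-2}/(sn^2)$.

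The genuine difficulty is packaged inside \cite{Bor}: the constant $1.219\cdots$ comes from Borwein's detailed analysis of the sup-norm of the truncated product on the unit circle, and without that input the approach collapses. Given Borwein's theorem as a black box, however, the remaining manipulations—the trivial coefficient bound, the factorization in $s$, and a one-line numerical check of $(1.219\cdots)^{p-1}>p$—are routine, so the only care needed is to ensure that the implied constants in $\gtrsim$ are tracked consistently in $n$ (and not in $p$ or $s$, which are fixed parameters here).
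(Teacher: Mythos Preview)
Your proposal is correct and follows essentially the same route as the paper: the triangle-inequality bound $\max_i|t_{i,p,s}|\ge \|T_{p,s,n}\|_{|q|=1}/(d_{p,s,n}+1)$, the factorization $\|T_{p,s,n}\|_{|q|=1}=\|T_{p,1,n}\|_{|q|=1}^s$, and Borwein's lower bound from \cite{Bor}. The only cosmetic difference is that the paper cites the inequality $(1.219\cdots)^{(p-1)(n+1)}>p^{n+1}$ as part of Borwein's statement, whereas you verify $(1.219\cdots)^{p-1}>p$ for $p\ge 17$ directly; both are fine.
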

\section{Li-Wan Sieve}\label{LWs}
The quantity $M_{p,s,n}(k_1,k_2,\cdots,k_s;b)$ is the number of certain type of subsets of $D_p^s$. As in \cite{Li} we apply some elementary character theory to estimate it. 

We note that 
\begin{eqnarray*}
\rho:=\sum\limits_{\psi \in \hat{G}}\psi
\end{eqnarray*}
is the regular character of $G$. It is well-known that $\rho(g)=0$ for all $g\in G\setminus \{0\}$, and that $\rho(0)=|G|=N_p$.
Given $0< r \leq |D_{p}|$, a character $\psi \in \hat{G}$, and $\bar{x}=(x_{1},\ldots ,\ x_{r})$, we set
\begin{eqnarray*}
\prod_{i=1}^r\psi(x_{i}):=f_{\psi}(\bar{x}),\ \text{and}\hspace{1cm}  
\mathcal{S}(\bar{x}):=\sum_{i=1}^r x_i.
\end{eqnarray*}
Let $Y_{p,s}^{k_1,k_2,\cdots,k_s}$ denote the cartesian product $\displaystyle\prod_{i=1}^s \overline{X}_{p,k_i}$. Then we have
\begin{eqnarray}
&&k_1!k_2!\cdots k_s!M_{p,s,n}(k_1,k_2,\cdots,k_s;b)=\dfrac{1}{N_p}\sum\limits_{(\bar{x}_1,\bar{x}_2,\cdots,\bar{x}_s)\in Y_{p,s}^{k_1,k_2,\cdots,k_s}} \sum \limits_{\psi \in \hat{G}} \psi(\mathcal{S}(\bar{x}_1)+\mathcal{S}(\bar{x}_2)+\cdots+\mathcal{S}(\bar{x}_s)-b)\notag\\
&&\hspace{3cm}=N_p^{-1}\prod_{i=1}^s\left(\dfrac{(p-1)N_p}{p} \right)_{k_{i}}+N_p^{-1}\sum\limits_{(\bar{x}_1,\bar{x}_2,\cdots,\bar{x}_s)\in Y_{p,s}^{k_1,k_2,\cdots,k_s}} \sum \limits_{\psi_0\neq \psi \in \hat{G}}\psi^{-1}(b)\prod_{i=1}^s\psi(\mathcal{S}(\bar{x}_i)).\notag
\end{eqnarray}
In the right-hand side above we interchange the sums to get,
\begin{eqnarray}
&&k_1!k_2!\cdots k_s!M_{p,s,n}(k_1,k_2,\cdots,k_s;b)=N_p^{-1}\prod_{i=1}^s\left(\dfrac{(p-1)N_p}{p} \right)_{k_{i}}\notag\\&&\hspace{3cm}+N_p^{-1}\sum \limits_{\psi_0\neq \psi \in \hat{G}}\psi^{-1}(b)\sum\limits_{(\bar{x}_1,\bar{x}_2,\cdots,\bar{x}_s)\in Y_{p,s}^{k_1,k_2,\cdots,k_s}} \prod_{i=1}^sf_\psi(\bar{x}_i)\notag\\
&&\hspace{4cm}=N_p^{-1}\prod_{i=1}^s\left(\dfrac{(p-1)N_p}{p} \right)_{k_{i}}+N_p^{-1}\sum \limits_{\psi_0\neq \psi \in \hat{G}}\psi^{-1}(b)\prod_{i=1}^s\left(\sum\limits_{\bar{x}_i\in \overline{X}_{p,k_i}} f_\psi(\bar{x}_i)\right)
\end{eqnarray}
For a $Y \subset X_{p,k}$ and a character $\psi \in \hat{G}$, set $F_{\psi}(Y):=\sum\limits_{\bar{y} \in Y} f_{\psi}(\bar{y})$. We now have 
\begin{eqnarray}\label{rc}
\ \ \ \ \ \ \ \ \ \ \ k_1!k_2!\cdots k_s!M_{p,s,n}(k_1,k_2,\cdots,k_s;b)=N_p^{-1}\prod_{i=1}^s\left(\dfrac{(p-1)N_p}{p} \right)_{k_{i}}+N_p^{-1}\sum \limits_{\psi_0\neq \psi \in \hat{G}}\psi^{-1}(b)\prod_{i=1}^sF_{\psi}(\overline{X}_{p,k_i})
\end{eqnarray}
We now estimate sums of the form $F_{\psi}(\overline{X}_{p,k})$.
The symmetric group $S_{k}$ acts naturally on $X_{p,k}=D_p^{k}$. Let $\tau \in S_{k}$ be a permutation whose cycle decomposition is
\begin{eqnarray*}
\tau=(i_1i_2\cdots i_{a_1})(j_1j_2\cdots j_{a_2})\cdots (\ell_1\ell_2\cdots \ell_{a_s})
\end{eqnarray*}
where $a_i\geq 1, 1\leq i\leq s$. We define 
\begin{eqnarray*}
X_{p,k}^{\tau}:=\left\{(x_1,x_2,\cdots,x_k)\in X_{p,k}: x_{i_1}=\cdots=x_{i_{a_1}}, \cdots,x_{\ell_1}=\cdots=x_{\ell_{a_s}} \right\}.
\end{eqnarray*}
In other words, $X_{p,k}^{\tau}$ is the set of elements in $X_{p,k}$ fixed under the action of $\tau$. Let $C_{k}$ be a set of conjugacy class representatives of $S_{k}$.  
Let us denote by $C(\tau)$ the number of elements conjugate to $\tau$. 
Now for any $\tau \in S_{k}$, we have  $\tau(X_{p,k})=X_{p,k}$. We note that for any pair $\tau$, $\tau'$ of conjugate permutations, and for any $\psi \in \hat{G}$, we have $F_{\psi}(\overline{X}^\tau_{n,k})=F_{\psi}(\overline{X}^{\tau'}_{n,k_i})$. That is, according to the definitions in \cite{Li-Wan}, $X_{p,k}$ is symmetric and $f_\psi$ is normal on $X$. Thus we have the following result which is essentially \cite[Proposition 2.8]{Li-Wan}.
\begin{prop}\label{LWS}
We have
\begin{equation*}
F_{\psi}(\overline{X}_{p,k})=\sum\limits_{\tau \in C_{k}} \sign(\tau)C(\tau)F_{\psi}(\overline{X}^\tau_{p,k}).
\end{equation*}
\end{prop}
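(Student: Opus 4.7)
The plan is to prove the identity as a direct application of inclusion--exclusion over the stabilizer subgroup of each tuple in $X_{p,k}$; this is precisely the mechanism behind the Li--Wan sieve \cite{Li-Wan}. The cornerstone is the following classical observation: for $\bar{x}=(x_1,\ldots,x_k)\in X_{p,k}$, let $m_1,m_2,\ldots$ denote the multiplicities of its distinct coordinate values. Then $\mathrm{Stab}(\bar{x})\leq S_k$ is the Young subgroup $\prod_{i} S_{m_i}$, so
\begin{equation*}
\sum_{\tau\in\mathrm{Stab}(\bar{x})}\sign(\tau)\;=\;\prod_i\!\left(\sum_{\sigma\in S_{m_i}}\sign(\sigma)\right),
\end{equation*}
which equals $1$ when every $m_i=1$ (i.e.\ $\bar{x}\in\overline{X}_{p,k}$) and vanishes otherwise, since $\sum_{\sigma\in S_m}\sign(\sigma)=0$ for each $m\geq 2$.

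Multiplying this signed-stabilizer identity by $f_\psi(\bar{x})$ and summing over $\bar{x}\in X_{p,k}$ collapses the left-hand side to
\begin{equation*}
F_\psi(\overline{X}_{p,k})\;=\;\sum_{\bar{x}\in X_{p,k}}f_\psi(\bar{x})\sum_{\tau\in\mathrm{Stab}(\bar{x})}\sign(\tau).
\end{equation*}
Next I would swap the two summations; the pairs $(\bar{x},\tau)$ with $\tau\in\mathrm{Stab}(\bar{x})$ are in bijection with the pairs with $\bar{x}\in\overline{X}^\tau_{p,k}$ (i.e.\ $\bar{x}$ fixed by $\tau$). This rewrites the right-hand side as
\begin{equation*}
F_\psi(\overline{X}_{p,k})\;=\;\sum_{\tau\in S_k}\sign(\tau)\sum_{\bar{x}\in\overline{X}^\tau_{p,k}}f_\psi(\bar{x})\;=\;\sum_{\tau\in S_k}\sign(\tau)\,F_\psi(\overline{X}^\tau_{p,k}).
\end{equation*}

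Finally I would collapse the outer sum to one over a set $C_k$ of conjugacy class representatives. As noted immediately before the proposition, $\sign$ is a class function, and $f_\psi$ is normal on $X_{p,k}$, so $F_\psi(\overline{X}^\tau_{p,k})$ depends on $\tau$ only through its cycle type; this normality is immediate because $f_\psi(\bar{x})=\prod_{i}\psi(x_i)$ is a symmetric function of its arguments. Each conjugacy class thus contributes $C(\tau)\sign(\tau)F_\psi(\overline{X}^\tau_{p,k})$ to the sum, yielding the claimed identity.

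The only step with genuine content is the signed-stabilizer calculation in the first paragraph; the interchange of summations and the class-function grouping are purely formal. I do not anticipate a real obstacle, since the hypotheses needed for the argument (the $S_k$-invariance of $X_{p,k}$ and the normality of $f_\psi$) are verified in the paragraph preceding the statement.
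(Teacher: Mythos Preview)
Your argument is correct and is precisely the Li--Wan sieve mechanism that the paper invokes: the paper does not supply its own proof here but simply records the proposition as ``essentially \cite[Proposition~2.8]{Li-Wan}'', and what you have written is exactly the standard proof of that result (signed-stabilizer indicator for distinctness, swap of summation, collapse by conjugacy class). One minor remark: the paper's notation is slightly inconsistent---the proposition writes $\overline{X}^{\tau}_{p,k}$ while the surrounding definitions and Lemma~\ref{Ftau} use $X^{\tau}_{p,k}$ for the $\tau$-fixed tuples in $D_p^{k}$---so your use of $\overline{X}^{\tau}_{p,k}$ simply follows the statement as written and causes no mathematical issue.
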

\subsection{Some useful lemmas}
The following lemma exhibits the relationship between $F_{\psi}(\overline{X}^\tau_{p,k})$ and the cycle structure of $\tau$.  
\begin{lem}\label{Ftau}
Let $\tau \in C_{k}$ be the representative whose cyclic structure is associated with the partition $(1^{c_{1}},2^{c_{2}},\ldots k^{c_{k}})$ of $k$. Then we have $F_{\psi}(X^{\tau}_{p,k})=\prod_{i=1}^{k}(\sum\limits_{a \in D_p}\psi^{i}(a))^{c_{i}}$.
\end{lem}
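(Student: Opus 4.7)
The proof is essentially a bookkeeping exercise: unpack the definitions and split the product $f_\psi$ according to the cycle decomposition of $\tau$. First I would rewrite $X^\tau_{p,k}$ explicitly: an element $\bar{x}=(x_1,\dots,x_k) \in D_p^k$ is fixed by $\tau$ if and only if $x_i = x_{\tau(i)}$ for every $i$, which is equivalent to saying that the coordinates indexed by any single cycle of $\tau$ all take a common value in $D_p$. Thus the map sending a tuple $\bar{x} \in X^\tau_{p,k}$ to the vector of its cycle values gives a bijection between $X^\tau_{p,k}$ and $D_p^{c_1+c_2+\cdots+c_k}$, where the factors are indexed by the cycles of $\tau$.

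Next I would compute $f_\psi(\bar{x}) = \prod_{j=1}^k \psi(x_j)$ on a tuple $\bar{x} \in X^\tau_{p,k}$. Group the $k$ factors according to the cycles of $\tau$: if a cycle has length $i$ and its coordinates carry the common value $a \in D_p$, then the $i$ factors it contributes multiply to $\psi(a)^i = \psi^i(a)$. Because the cycle decomposition of $\tau$ partitions the index set $\{1,2,\dots,k\}$, we obtain the factorization
\begin{equation*}
f_\psi(\bar{x}) \;=\; \prod_{\gamma \text{ cycle of } \tau} \psi^{\ell(\gamma)}(a_\gamma),
\end{equation*}
where $\ell(\gamma)$ is the length of the cycle $\gamma$ and $a_\gamma$ is its common value.

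Finally, I would sum $f_\psi$ over $X^\tau_{p,k}$ using the bijection from the first step. Because the values $a_\gamma$ on distinct cycles vary independently in $D_p$, Fubini gives
\begin{equation*}
F_\psi(X^\tau_{p,k}) \;=\; \sum_{\bar{x} \in X^\tau_{p,k}} f_\psi(\bar{x}) \;=\; \prod_{\gamma \text{ cycle of } \tau} \Bigl(\sum_{a \in D_p} \psi^{\ell(\gamma)}(a)\Bigr).
\end{equation*}
Grouping the cycles by their length and recalling that $\tau$ has exactly $c_i$ cycles of length $i$ yields the desired formula
\begin{equation*}
F_\psi(X^\tau_{p,k}) \;=\; \prod_{i=1}^{k}\Bigl(\sum_{a \in D_p} \psi^i(a)\Bigr)^{c_i}.
\end{equation*}

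There is no genuine obstacle here; the only mild care needed is to make sure the indexing of factors of $f_\psi$ by cycles is handled precisely (i.e.\ that the cycles really do partition $\{1,\dots,k\}$ and that each coordinate is hit exactly once), and to confirm that the independence of the choices of cycle values legitimately yields a product formula.
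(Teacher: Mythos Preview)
Your proof is correct and follows essentially the same approach as the paper: both identify $X^{\tau}_{p,k}$ with tuples that are constant on each cycle, factor $f_{\psi}$ cycle by cycle so that a cycle of length $i$ contributes $\psi^{i}$ of its common value, and then sum independently over the cycle values. Your write-up is simply more explicit than the paper's terse three-line computation.
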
 
\begin{proof}
Recall that
\begin{align*}
F_{\psi}(X^{\tau}_{p,k})&=\sum\limits_{\bar{x} \in X_{p,k}^{\tau}} \prod_{i=1}^{k}\psi(x_{i})\\
&=\sum\limits_{\bar{x}\in X_{p,k}^{\tau}} \prod_{i=1}^{c_{1}}\psi(x_{i})\prod_{i=1}^{c_{2}}\psi^{2}(x_{c_{1}+2i})\ldots \prod_{i=1}^{c_{k}}\psi^{k}(x_{c_{1}+c_{2}\ldots+ki})\\
&= \prod_{i=1}^{k}(\sum\limits_{a \in D_p}\psi^{i}(a))^{c_{i}}.
\end{align*}
\end{proof}
Given $\chi\in \hat{G}$ define
\begin{eqnarray}\label{schi}
s_{D_p}(\chi):=\sum\limits_{a\in D_p} \chi(a).
\end{eqnarray}
Let $N(c_{1},c_{2},\ldots c_{k})$ denote the number of elements of $S_{k}$ of cycle type $(c_{1},c_{2},\ldots c_{k})$. It is well-known (see, for example, \cite{Stan}) that
\begin{eqnarray}\label{Ncom}
N(c_{1},c_{2},\ldots c_{k})=\dfrac{k!}{1^{c_1}c_1!2^{c_2}c_2!\cdots k^{c_k}c_k!}.
\end{eqnarray}
Then
\begin{lem}\label{Hpsi}
We have
\begin{eqnarray*}
F_{\psi}(\overline{X}_{p,k})=(-1)^k\sum_{\sum_{i}ic_i=k}N(c_1,c_2,\cdots, c_k)\prod_{i=1}^{k}(-s_{D_p}(\psi^i))^{c_{i}}.
\end{eqnarray*}
\end{lem}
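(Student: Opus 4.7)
The plan is to combine Proposition \ref{LWS} with Lemma \ref{Ftau} and then carefully track the sign contributions from the cycle decomposition. Start from
\begin{equation*}
F_{\psi}(\overline{X}_{p,k}) = \sum_{\tau \in C_k} \sign(\tau)\, C(\tau)\, F_{\psi}(X^{\tau}_{p,k}),
\end{equation*}
where the summation variable $\tau$ ranges over a set of conjugacy class representatives of $S_k$. Parametrise each $\tau$ by its cycle type $(1^{c_1}, 2^{c_2},\ldots, k^{c_k})$ with $\sum_i i c_i = k$, which is the natural index set for the claimed identity.

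Next I would substitute the two known ingredients. By Lemma \ref{Ftau},
\begin{equation*}
F_{\psi}(X^{\tau}_{p,k}) = \prod_{i=1}^{k} s_{D_p}(\psi^i)^{c_i},
\end{equation*}
using the definition \eqref{schi}. The number $C(\tau)$ of elements conjugate to $\tau$ equals $N(c_1,\ldots,c_k)$, given by \eqref{Ncom}. For the sign, a cycle of length $i$ contributes $(-1)^{i-1}$, so $\sign(\tau) = \prod_{i=1}^{k}(-1)^{(i-1)c_i} = (-1)^{\sum_i (i-1)c_i}$. Using $\sum_i i c_i = k$ rewrites this as $(-1)^{k - \sum_i c_i}$.

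Assembling these gives
\begin{equation*}
F_{\psi}(\overline{X}_{p,k}) = \sum_{\sum_i i c_i = k} (-1)^{k - \sum_i c_i}\, N(c_1,\ldots,c_k) \prod_{i=1}^{k} s_{D_p}(\psi^i)^{c_i}.
\end{equation*}
The final step is a sign manipulation: since $(-1)^{k - \sum_i c_i} = (-1)^{k + \sum_i c_i}$, I can distribute $(-1)^{\sum_i c_i} = \prod_i (-1)^{c_i}$ into the product and pull the remaining $(-1)^k$ out of the sum, converting each factor $s_{D_p}(\psi^i)^{c_i}$ into $(-s_{D_p}(\psi^i))^{c_i}$. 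This yields the stated identity.

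There is no real obstacle here; the lemma is essentially a bookkeeping consequence of Proposition \ref{LWS} and Lemma \ref{Ftau}. The only thing to be careful about is the sign rewriting, specifically verifying that $(-1)^{k-\sum_i c_i} \prod_i s_{D_p}(\psi^i)^{c_i} = (-1)^k \prod_i (-s_{D_p}(\psi^i))^{c_i}$, which is immediate once one notes the parities of $k - \sum_i c_i$ and $k + \sum_i c_i$ agree.
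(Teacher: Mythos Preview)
Your proof is correct and follows essentially the same route as the paper: both start from Proposition~\ref{LWS}, reindex the sum over conjugacy class representatives by cycle type, substitute Lemma~\ref{Ftau} together with $C(\tau)=N(c_1,\ldots,c_k)$ and $\sign(\tau)=(-1)^{k-\sum_i c_i}$, and then absorb the factor $(-1)^{-\sum_i c_i}$ into the product to produce $(-s_{D_p}(\psi^i))^{c_i}$. If anything, your writeup is slightly more explicit about the sign manipulation than the paper's.
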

\begin{proof}
To prove this lemma, we first note that $\sign(\tau)=(-1)^{k-\sum_{i}c_i}$. Also the cyclic structure for every $\tau\in C_k$ can be associated to a partition of $k$ of the form $(1^{c_1}, 2^{c_2},\cdots,k^{c_k})$. Hence the right-hand sum in Proposition \ref{LWS} runs over all such partitions of $k$. Noting that the conjugate permutations have same cycle type, and there are exactly $N(c_1,c_2,\cdots,c_k)$ permutations with cycle type $(c_{1},c_{2},\cdots c_{k})$ we conclude, in view of Lemma \ref{Ftau} that
\begin{eqnarray*}
F_{\psi}(\overline{X}_{p,k})=(-1)^k\sum_{\sum_{i}ic_i=k}N(c_1,c_2,\cdots, c_k)\prod_{i=1}^{k}(-\sum\limits_{a \in D_p}\psi^{i}(a))^{c_{i}}.
\end{eqnarray*}
\end{proof}
define the following polynomial in $k$ variables:
\begin{eqnarray}\label{Zgen}
Z_{k}(t_{1},\ldots, t_{k}):= \sum\limits_{\sum ic_{i}=k}N(c_{1},\ldots,c_{k})t_{1}^{c_{1}}\ldots t_{k}^{c_{k}}.
\end{eqnarray}
From Lemma \ref{Hpsi} and \eqref{Zgen} we immediately see that
\begin{cor}\label{HZ}
We have
\begin{eqnarray*}
F_{\psi}(\overline{X}_{p,k})=(-1)^kZ_k(-s_{D_p}(\psi),-s_{D_p}(\psi^2),\cdots,-s_{D_p}(\psi^k))
\end{eqnarray*}
where for $\chi\in\hat{G}$, $s_{D_p}(\chi)$ is as in \emph{(\ref{schi})}.
\end{cor}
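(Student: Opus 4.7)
The plan is to observe that Corollary \ref{HZ} is essentially a cosmetic reformulation of Lemma \ref{Hpsi} via the definition of $Z_k$ in \eqref{Zgen}, so the proof is a one-step substitution rather than a genuine argument. I would not introduce any new machinery.

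First I would recall Lemma \ref{Hpsi}, which expresses
\[
F_{\psi}(\overline{X}_{p,k}) = (-1)^k\sum_{\sum_i ic_i = k} N(c_1,\ldots,c_k)\prod_{i=1}^{k}\bigl(-s_{D_p}(\psi^i)\bigr)^{c_i},
\]
where the sum runs over all tuples $(c_1,\ldots,c_k)$ of non-negative integers with $\sum_i ic_i = k$, i.e. over all partitions of $k$ written in frequency notation. Next I would write down the definition \eqref{Zgen} of the cycle index polynomial
\[
Z_k(t_1,\ldots,t_k) = \sum_{\sum_i ic_i = k} N(c_1,\ldots,c_k)\, t_1^{c_1}\cdots t_k^{c_k},
\]
in which the indexing set of the sum is identical to that in Lemma \ref{Hpsi}.

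The proof then consists of specializing the variables $t_i$ in $Z_k(t_1,\ldots,t_k)$ to the values $t_i = -s_{D_p}(\psi^i)$ for $i = 1,\ldots,k$. Term-by-term comparison with the sum from Lemma \ref{Hpsi} shows that the two expressions are equal, which yields
\[
F_{\psi}(\overline{X}_{p,k}) = (-1)^k Z_k\bigl(-s_{D_p}(\psi), -s_{D_p}(\psi^2), \ldots, -s_{D_p}(\psi^k)\bigr),
\]
as claimed. Since this is purely a matter of matching notations, there is no genuine obstacle; the only thing to be careful about is ensuring that the sign convention $(-1)^k$ is placed outside the polynomial $Z_k$ rather than being absorbed into its variables, and that each $(-s_{D_p}(\psi^i))^{c_i}$ matches the $t_i^{c_i}$ factor of the corresponding monomial in $Z_k$.
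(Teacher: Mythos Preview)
Your proposal is correct and matches the paper's own treatment exactly: the paper states that Corollary~\ref{HZ} follows immediately from Lemma~\ref{Hpsi} and the definition~\eqref{Zgen}, which is precisely the one-step substitution $t_i = -s_{D_p}(\psi^i)$ you describe. There is nothing to add or change.
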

Thus, it only remains to evaluate the sums $s_{D_p}(\chi)$ for $\chi=\psi^i, i=1, 2,\cdots, k$, and we do this next. Let $o(\chi)$ denotes the order of the character $\chi$. Then 
\begin{lem}\label{HZdel}
Let 
$$\delta_{1}^{\psi}(i):=\left\{\begin{array}{cc}
0,& \emph{if $o(\psi)\nmid i$},\\
-(p-1)N_p/p,& \emph{otherwise}
 \end{array}\right.$$
 and
 $$\delta_{2}^{\psi}(i):=\left\{\begin{array}{ccc}
0,&\emph{if $o(\psi)\neq 1,p$},\\
N_p/p,&\emph{if $o(\psi)/p \mid i$ and $o(\psi) \nmid i$},\\
-(p-1)N_p/p,& \emph{if $o(\psi) \mid i.$}
 \end{array}\right.$$ Then 
 \begin{enumerate}
 \item if $p \nmid o(\psi)$, $F_{\psi}(\overline{X}_{p,k})=(-1)^{k}Z_{k}(\delta_{1}^{\psi}(1),\ldots,\ \delta_{1}^{\psi}(k))$, and
 \item if $p \mid o(\psi)$, $F_{\psi}(\overline{X}_{p,k})=(-1)^{k}Z_{k}(\delta_{2}^{\psi}(1),\ldots,\ \delta_{2}^{\psi}(k))$.
 \end{enumerate}
\end{lem}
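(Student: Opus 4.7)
The plan is to reduce the statement to a direct computation of the character sums $s_{D_p}(\psi^i)$ and then match the result to the piecewise definitions of $\delta_1^\psi$ and $\delta_2^\psi$. Corollary~\ref{HZ} already expresses
\[
F_\psi(\overline{X}_{p,k}) = (-1)^{k}\, Z_k\bigl(-s_{D_p}(\psi),\, -s_{D_p}(\psi^2),\ldots,\, -s_{D_p}(\psi^k)\bigr),
\]
so after substituting into $Z_k$ it suffices, for each $i$, to show $-s_{D_p}(\psi^i) = \delta_1^\psi(i)$ when $p\nmid o(\psi)$ and $-s_{D_p}(\psi^i) = \delta_2^\psi(i)$ when $p\mid o(\psi)$. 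The key structural input is that $D_p = G\setminus H$, where $H = \{0,p,2p,\ldots,np\}$ is the unique subgroup of $G=\mathbb{Z}_{N_p}$ of index $p$. Writing $\chi = \psi^i$, this gives
\[
s_{D_p}(\chi) \;=\; \sum_{a\in G}\chi(a) \;-\; \sum_{a\in H}\chi(a),
\]
and by orthogonality the first sum equals $|G|=N_p$ if $\chi$ is trivial on $G$ (else $0$) while the second equals $|H|=N_p/p$ if $\chi|_H$ is trivial (else $0$). Since $H$ has index $p$, the characters of $G$ trivial on $H$ are precisely those of order dividing $p$, i.e.\ those with $o(\chi)\in\{1,p\}$.

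The proof is then a short case analysis on $o(\psi^i) = o(\psi)/\gcd(o(\psi),i)$. If $p\nmid o(\psi)$, then $o(\psi^i)$ is coprime to $p$, so $o(\psi^i)\in\{1,p\}$ forces $o(\psi^i)=1$, i.e.\ $o(\psi)\mid i$; in that case $s_{D_p}(\chi) = (p-1)N_p/p$, and otherwise both sums vanish, which after negation matches $\delta_1^\psi(i)$. If $p\mid o(\psi)$, set $m = o(\psi)$ and distinguish three subcases: (i) $m\mid i$, where $\chi$ is trivial and $s_{D_p}(\chi) = (p-1)N_p/p$; (ii) $(m/p)\mid i$ but $m\nmid i$, where writing $i=(m/p)j$ with $p\nmid j$ yields $\gcd(m,i) = m/p$ and hence $o(\chi)=p$, so $\chi|_H$ is trivial but $\chi$ itself is not, giving $s_{D_p}(\chi) = -N_p/p$; (iii) $(m/p)\nmid i$, where $o(\chi)\notin\{1,p\}$, so both sums vanish. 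Negating in each subcase recovers the three values $-(p-1)N_p/p$, $N_p/p$, $0$ of $\delta_2^\psi$.

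No step is technically subtle. The one place that requires a moment of care is subcase (ii), where one must justify $\gcd(m,i) = m/p$ precisely under the hypothesis $(m/p)\mid i$, $m\nmid i$; this uses only that $p$ is prime together with the resulting condition $p\nmid j$. With that in hand, substituting back into Corollary~\ref{HZ} via $Z_k$ completes the proof.
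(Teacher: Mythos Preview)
Your proof is correct and follows essentially the same route as the paper: both reduce via Corollary~\ref{HZ} to evaluating $s_{D_p}(\psi^i)$, use that $D_p=G\setminus H$ with $H$ the index-$p$ subgroup together with character orthogonality to get the three possible values $0$, $(p-1)N_p/p$, $-N_p/p$ according to whether $o(\psi^i)\notin\{1,p\}$, $o(\psi^i)=1$, or $o(\psi^i)=p$, and then split on $p\mid o(\psi)$ versus $p\nmid o(\psi)$. Your explicit verification that $\gcd(m,i)=m/p$ in subcase~(ii) is a small elaboration of a step the paper leaves implicit, but the argument is otherwise identical.
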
  
\begin{proof}
First, observe that $G\setminus D_p$ is a subgroup of index $p$. 
Hence from elementary character theory, we can deduce that 
\begin{itemize}
    \item[A.] if $o(\psi)\neq 1,p$, we have $s_{D_p}(\psi)=s_{G}(\psi)-s_{G\setminus D_p}(\psi)=0$,
     \item[B.] if $o(\psi)=1$, we have $s_{D_p}(\psi)=|D_p|=(p-1)N_p/p$, and
    \item[C.] if $o(\psi)=p$, we have $s_{D_p}(\psi)=-s_{G\setminus D_p}(\psi)=-|G\setminus D_p|=-N_p/p$.
\end{itemize}
In order to estimate $F_{\psi}(\overline{X}_{p,k})$, we need to consider the following two cases:\\ \\
\textbf{Case I}: $p \nmid o(\psi)$. In this case, for all $i$ we have $p \nmid o(\psi^{i})$ since $o(\psi^i)=o(\psi)/(o(\psi),i)$. Thus from (A) and (B) we see that
\begin{eqnarray}
s_{D_p}(\psi^i)=\left\{\begin{array}{cc}
(p-1)N_p/p, & \text{if}\;o(\psi^i)=1,\\
0,& \text{otherwise}
\end{array}\right.
\end{eqnarray}
which implies (1) in view of Corollary \ref{HZ} and the definition of $\delta^\psi_1(i)$.\\ \\
\textbf{Case II}: $p\mid o(\psi)$. Here we have the following from (A), (B) and (C):
\begin{eqnarray}
s_{D_p}(\psi^i)=\left\{\begin{array}{ccc}
(p-1)N_p/p, & \text{if}\;o(\psi^i)=1,\\
-N_p/p,& \text{if}\;o(\psi^i)=p,\\
0,&\text{otherwise}
\end{array}\right.
\end{eqnarray}
which implies (2) in view of Corollary \ref{HZ} and the definition of $\delta^\psi_2(i)$.
\end{proof}
\subsection{Some combinatorial functions and estimates}
We now evaluate $Z_{k}\left(\delta_{1}^{\psi}(1),\cdots,\ \delta_{1}^{\psi}(k)\right)$ and\\ $Z_{k}(\delta_{1}^{\psi}(1),\ldots,\ \delta_{1}^{\psi}(k))$.
From (\ref{Ncom}) and (\ref{Zgen}) we immediately deduce the following:
\begin{lem}[Exponential generating function]\label{Egf}
We have
\begin{eqnarray*}
\sum\limits_{k \geq 0}Z_{k}(t_{1},t_{2}, \ldots t_{k})\dfrac{u^{k}}{k!}=e^{ut_{1}+u^{2}\frac{t_{2}}{2}+\cdots }
\end{eqnarray*}
\end{lem}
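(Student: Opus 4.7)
The identity to prove is the classical cycle index generating function for the symmetric group, and the natural approach is to expand the right-hand side and match coefficients of $u^k$ against the definition of $Z_k$.

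The plan is to rewrite the exponential on the right-hand side as
\begin{eqnarray*}
\exp\!\left(\sum_{i\geq 1}\frac{u^i t_i}{i}\right) = \prod_{i\geq 1}\exp\!\left(\frac{u^i t_i}{i}\right) = \prod_{i\geq 1}\sum_{c_i\geq 0}\frac{1}{c_i!}\left(\frac{u^i t_i}{i}\right)^{c_i},
\end{eqnarray*}
and then expand this product formally. Multiplying out gives a sum over all sequences $(c_1,c_2,\ldots)$ of non-negative integers (only finitely many nonzero), contributing a term proportional to $u^{\sum i c_i}\, t_1^{c_1} t_2^{c_2}\cdots$.

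Next I would collect terms by the total degree $k=\sum_{i} i c_i$ in $u$. For each fixed $k$, the tuples $(c_1,\ldots,c_k)$ with $\sum i c_i = k$ are exactly the partitions of $k$ written in exponential notation, and the coefficient of $u^k$ becomes
\begin{eqnarray*}
\sum_{\sum_i ic_i=k}\prod_{i=1}^{k}\frac{t_i^{c_i}}{i^{c_i}\, c_i!} \;=\; \frac{1}{k!}\sum_{\sum_i ic_i=k}\frac{k!}{1^{c_1}c_1!\,2^{c_2}c_2!\cdots k^{c_k}c_k!}\, t_1^{c_1}\cdots t_k^{c_k}.
\end{eqnarray*}
Invoking the formula (\ref{Ncom}) for $N(c_1,\ldots,c_k)$ and the definition (\ref{Zgen}) of $Z_k$, the inner sum is precisely $Z_k(t_1,\ldots,t_k)$, so the coefficient of $u^k$ on the right-hand side equals $Z_k(t_1,\ldots,t_k)/k!$, which matches the left-hand side term by term.

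There is essentially no obstacle here since the identity is purely combinatorial/formal: one only needs to justify manipulating the product of exponentials and interchanging the order of summation, both of which are valid at the level of formal power series in $u$ with coefficients in $\mathbb{Z}[t_1,t_2,\ldots]$. The only care needed is to note that for each fixed $k$ only finitely many tuples $(c_1,\ldots,c_k)$ contribute (since $c_i=0$ for $i>k$ when $\sum ic_i=k$), so the coefficient extraction is well-defined.
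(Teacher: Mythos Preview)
Your proof is correct and follows exactly the approach the paper indicates: the paper simply states that the lemma follows immediately from (\ref{Ncom}) and (\ref{Zgen}), and your argument is precisely the standard expansion that makes this explicit.
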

The next result follows by substituting special values for the variables $t_1, t_2, \cdots$ in Lemma \ref{Egf}. 
\begin{cor}\label{speZ}
We have
\begin{enumerate}
    \item if $t_i=a$ iff $d\mid i$ and $t_i=0$ iff $d\nmid i$, then
\begin{eqnarray*}
Z_k(\underbrace{0,\cdots,0}_{d-1},a,\underbrace{0,\cdots,0}_{d-1},a,\cdots)=\left[\dfrac{u^k}{k!}\right]\dfrac{1}{(1-u^d)^{a/d}}.
\end{eqnarray*}
\item if $t_i=a$ iff $d\mid i$ and $p\cdot d\nmid i$; if $t_i=b$ iff $p\cdot d\mid i$; and if $t_i=0$ iff $d\nmid i$, then
\begin{eqnarray*}
Z_k(\overbrace{\underbrace{0,\cdots,0}_{d-1},a,\underbrace{0,\cdots,0}_{d-1},a,\underbrace{0,\cdots,0}_{d-1}}^{p\cdot d-1},b,\cdots)=\left[\dfrac{u^k}{k!}\right]\dfrac{1}{(1-u^d)^{a/d}(1-u^{pd})^{\frac{b-a}{pd}}}.
\end{eqnarray*}
\end{enumerate}
\end{cor}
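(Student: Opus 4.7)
The plan is to substitute the prescribed values of $t_i$ directly into the exponential generating function of Lemma \ref{Egf} and simplify the exponent using the Taylor expansion $-\log(1-x) = \sum_{j \geq 1} x^j/j$. Since both generating identities then reduce to reading off the coefficient of $u^k/k!$, the whole argument is essentially a bookkeeping computation; there is no serious obstacle.

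For part (1), I substitute $t_i = a \cdot \mathbf{1}[d \mid i]$. The exponent in Lemma \ref{Egf} becomes
\begin{equation*}
\sum_{i \geq 1} t_i \frac{u^i}{i} \;=\; a \sum_{j \geq 1} \frac{u^{dj}}{dj} \;=\; -\frac{a}{d}\log(1-u^d),
\end{equation*}
so the generating function is $(1-u^d)^{-a/d}$, and the coefficient of $u^k/k!$ gives the claimed value of $Z_k$.

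For part (2), the key observation is that one can decompose $t_i$ as $t_i = a \cdot \mathbf{1}[d \mid i] + (b-a) \cdot \mathbf{1}[pd \mid i]$, since $pd \mid i$ already implies $d \mid i$. Plugging this into Lemma \ref{Egf} yields
\begin{equation*}
\sum_{i \geq 1} t_i \frac{u^i}{i} \;=\; a \sum_{d \mid i} \frac{u^i}{i} + (b-a) \sum_{pd \mid i} \frac{u^i}{i} \;=\; -\frac{a}{d}\log(1-u^d) - \frac{b-a}{pd}\log(1-u^{pd}),
\end{equation*}
so exponentiating gives $(1-u^d)^{-a/d}(1-u^{pd})^{-(b-a)/pd}$. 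Extracting $[u^k/k!]$ proves the stated formula. Both parts thus follow immediately from the exponential generating function, with only the splitting in case (2) requiring any thought.
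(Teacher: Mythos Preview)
Your proof is correct and follows exactly the route the paper indicates: the corollary is introduced as a consequence of substituting special values into Lemma~\ref{Egf}, and the paper's proof simply refers the reader to the analogous computation in \cite[Lemma~2.3]{Li}. Your explicit Taylor-series bookkeeping, including the decomposition $t_i = a\cdot\mathbf{1}[d\mid i] + (b-a)\cdot\mathbf{1}[pd\mid i]$ in part~(2), is precisely the intended argument spelled out in full.
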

\begin{proof}
The proof of this corollary is similar to the case for $p=3$ in \cite[pp. 7, Lemma 2.3]{Li}.
\end{proof}
From Lemma \ref{HZdel} and Corollary \ref{speZ} we obtain:
\begin{lem}\label{Hfines}
We have
\begin{enumerate}
\item if $p\nmid o(\psi)$,
\begin{eqnarray*}
F_{\psi}(\overline{X}_{p,k})=(-1)^k\left[\dfrac{u^k}{k!}\right](1-u^{o(\psi)})^{\frac{(p-1)N_p}{po(\psi)}}\end{eqnarray*}
\item if $p|o(\psi)$,
\begin{eqnarray*}
F_{\psi}(\overline{X}_{p,k})=(-1)^k\left[\dfrac{u^k}{k!}\right]\dfrac{(1-u^{o(\psi)})^{\frac{N_p}{o(\psi)}}}{(1-u^{o(\psi)/p})^{\frac{N_p}{o(\psi)}}}.
\end{eqnarray*}
\end{enumerate}
\end{lem}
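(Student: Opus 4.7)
The proof is a direct substitution calculation that chains Lemma \ref{HZdel} to Corollary \ref{speZ}. I would handle parts (1) and (2) in parallel with the two cases of the corollary.

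For part (1), invoke Lemma \ref{HZdel}(1), set $d := o(\psi)$ and $a := -(p-1)N_p/p$, and observe that the sequence $(\delta_{1}^{\psi}(1), \delta_{1}^{\psi}(2), \ldots, \delta_{1}^{\psi}(k))$ fits the template of Corollary \ref{speZ}(1) exactly: it equals $a$ at every multiple of $d$ and vanishes elsewhere. Applying that corollary and simplifying the exponent $-a/d = (p-1)N_p/(p\cdot o(\psi))$ yields the generating function on the right-hand side of Lemma \ref{Hfines}(1), after factoring the $(-1)^k$ out front.

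For part (2), I would apply Lemma \ref{HZdel}(2) and set $d := o(\psi)/p$ (an integer since $p \mid o(\psi)$), $a := N_p/p$, and $b := -(p-1)N_p/p$. The three-case description of $\delta_{2}^{\psi}(i)$ then matches the template of Corollary \ref{speZ}(2): value $a$ at multiples of $d$ that are not multiples of $pd = o(\psi)$, value $b$ at multiples of $pd$, and zero otherwise. Computing $a/d = N_p/o(\psi)$ and $(b-a)/(pd) = -N_p/o(\psi)$, then rewriting $(1-u^{pd})^{-(b-a)/(pd)}$ as a numerator factor, produces the stated ratio.

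The whole argument is bookkeeping, since all combinatorial and character-theoretic content has already been absorbed into the two cited results. The only place to exercise care is the exponent computation in part (2): $b - a$ simplifies to $-N_p$ and the denominator $pd$ equals $o(\psi)$, so $(b-a)/(pd) = -N_p/o(\psi)$, and this is what produces the common exponent $N_p/o(\psi)$ appearing in both the numerator and the denominator of the claimed generating function.
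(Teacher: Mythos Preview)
Your argument is correct and is exactly the route the paper indicates: Lemma~\ref{Hfines} is obtained by combining Lemma~\ref{HZdel} with Corollary~\ref{speZ}, and your substitutions $d=o(\psi),\,a=-(p-1)N_p/p$ for part~(1) and $d=o(\psi)/p,\,a=N_p/p,\,b=-(p-1)N_p/p$ for part~(2) are the right ones, with the exponent arithmetic carried through accurately.
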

\section{Proofs of the main results}\label{Ps}
\begin{proof}[Proof of Theorem \emph{\ref{main1}}]
From (\ref{rc}) we have 
\begin{eqnarray}\label{Mkb}
M_{p,s,n}(k_1,k_2,\cdots,k_s;b)&=&N_p^{-1}\left\{\prod_{i=1}^s\binom{(p-1)N_p/p}{k_i}+P_{k_1,k_2,\cdots,k_s}+Q_{k_1,k_2,\cdots,k_s}+R_{k_1,k_2,\cdots,k_s}\right\},
\end{eqnarray}
where
\begin{align}\label{I-II-III}
P_{k_1,k_2,\cdots,k_s}&= \dfrac{1}{k_1!k_2!\cdots k_s!}\sum\limits_{\psi,p\nmid o(\psi)}\psi^{-1}(b)\prod_{i=1}^sF_{\psi}(\overline{X}_{p,k_i})\notag\\
Q_{k_1,k_2,\cdots,k_s}&=\dfrac{1}{k_1!k_2!\cdots k_s!}\sum\limits_{\psi, o(\psi)=p}\psi^{-1}(b)\prod_{i=1}^sF_{\psi}(\overline{X}_{p,k_i})\notag\\
R_{k_1,k_2,\cdots,k_s}&=\dfrac{1}{k_1!k_2!\cdots k_s!}\sum\limits_{ \substack{\psi,p\mid o(\psi)\\o(\psi)\neq p}}\psi^{-1}(b)\prod_{i=1}^sF_{\psi}(\overline{X}_{p,k_i}).
\end{align}
Using Lemma \ref{Hfines}, we see that
\begin{eqnarray}\label{I-II-III-r}
P_{k_1,k_2,\cdots,k_s}&=& \dfrac{(-1)^{k_1+k_2+\cdots+k_s}}{k_1!k_2!\cdots k_s!}\sum\limits_{\psi, p\nmid o(\psi)}\psi^{-1}(b) \prod_{i=1}^s\left[\dfrac{u^{k_i}}{k_i!}\right](1-u^{o(\psi)})^{\frac{(p-1)N_p}{po(\psi)}},\notag\\
Q_{k_1,k_2,\cdots,k_s}&=&\dfrac{(-1)^{k_1+k_2+\cdots+k_s}}{k_1!k_2!\cdots k_s!}\sum\limits_{\psi, o(\psi)=p}\psi^{-1}(b) \prod_{i=1}^s\left[\dfrac{u^{k_i}}{k_i!}\right]\left(\sum\limits_{i=0}^{p-1}u^{i}\right)^{\frac{N_p}{p}},\notag\\
R_{k_1,k_2,\cdots,k_s}&=&\dfrac{(-1)^{k_1,k_2,\cdots,k_s}}{k_1!k_2!\cdots k_s!}\sum\limits_{\substack{\psi, p\mid o(\psi)\\ o(\psi)\neq p}}\psi^{-1}(b) \prod_{i=1}^s\left[\dfrac{u^{k_i}}{k_i!}\right]\left(\sum\limits_{i=0}^{p-1}u^{o(\psi)i/p}\right)^{\frac{N_p}{o(\psi)}}.\notag\\
\end{eqnarray}
Recall that 
\begin{eqnarray}\label{Mb}
\ \ \ \ \ \ \ \ \ \ M_{p,s,n}(b)=\sum_{0\leq k_1,k_2,\cdots,k_s\leq |D_{p}|}(-1)^{k_1+k_2+\cdots+k_s}M_{p,s,n}(k_1,k_2,\cdots,k_s;b).
\end{eqnarray}
Using the well-known fact
\begin{eqnarray*}
\sum\limits_{k=0}^{|D_p|} (-1)^{k}\binom{|D_p|}{k}=0,
\end{eqnarray*}
we see that 
\begin{eqnarray}\label{pbino}
\mbox{}\\
\sum_{0\leq k_1,\cdots,k_s\leq |D_p|}(-1)^{k_1+k_2+\cdots+k_s}\prod_{i=1}^s\binom{(p-1)N_p/p}{k_i}=\left(\sum\limits_{k=0}^{|D_p|} (-1)^{k}\binom{|D_p|}{k}\right)^s=0.\notag
\end{eqnarray}
Thus (\ref{Mkb}), (\ref{Mb}) and (\ref{pbino}) yield,
\begin{eqnarray}\label{Mbr}
M_{p,s,n}(b)&=&\dfrac{1}{N_p}\left\{\sum_{0\leq k_1,k_2,\cdots,k_s\leq |D_{p}|}(-1)^{k_1+k_2+\cdots+k_s}P_{k_1,k_2,\cdots,k_s}\right.\notag\\&&\hspace{1.5 cm}\left.+\sum_{0\leq k_1,k_2,\cdots,k_s\leq |D_{p}|}(-1)^{k_1+k_2+\cdots+k_s}Q_{k_1,k_2,\cdots,k_s}\right.\notag\\&&\hspace{2 cm}\left.+\sum_{0\leq k_1,k_2,\cdots,k_s\leq |D_{p}|}(-1)^{k_1+k_2+\cdots+k_s}R_{k_1,k_2,\cdots,k_s}\right\}.
\end{eqnarray}
Given a character $\psi$ of order $p$, there is a unique $x \in \{1,\cdots,p-1\}$ such that for all $y \in \mathbb{Z}_{N_p}$, we have $\psi(y)=e^{2\pi i xy/p}$. Now\\ \\ 
\textbf{Case I}: If $p \mid b$. Then 
\begin{eqnarray}\label{exps1}
\sum\limits_{\psi,o(\psi)=p}\psi^{-1}(b)=\sum\limits_{x=1}^{p-1} e^{2\pi i bx/p}=p-1
\end{eqnarray}
\mbox{}\\ \\
\textbf{Case II}: If $p \nmid b$. Then, as $x$ runs over elements in $Z_p^{\times}$, so does $bx$ and we get 
\begin{eqnarray}\label{exps2}
\sum\limits_{\psi,o(\psi)=p}\psi^{-1}(b)=\sum\limits_{x=1}^{p-1} e^{2\pi i bx/p}=\sum\limits_{x=1}^{p-1} e^{2\pi i x/p}=-1.
\end{eqnarray}
So from (\ref{I-II-III-r}) we have
\begin{align}\label{summulti}
&\sum_{0\leq k_1,k_2,\cdots,k_s\leq |D_{p}|}(-1)^{k_1+k_2+\cdots+k_s}Q_{k_1,k_2,\cdots,k_s}\notag\\&= \left(\sum\limits_{\psi,o(\psi)=p}\psi^{-1}(b)\right) \left(\sum\limits_{0\leq k_1,\cdots,k_s\leq |D_p|}\dfrac{1}{k_1!\cdots k_s!} \prod_{i=1}^s\left[\dfrac{u^{k_i}}{k_i!}\right]\left(\sum\limits_{j=0}^{p-1} u^{j}\right)^{\frac{N_p}{p}}\right)\notag\\
&= \left(\sum\limits_{\psi,o(\psi)=p}\psi^{-1}(b)\right) \left(\sum\limits_{0\leq k_1,k_2,\cdots,k_s\leq |D_p|}\prod_{i=1}^s\left[u^{k_i}\right]\left(\sum\limits_{j=0}^{p-1} u^{j}\right)^{\frac{N_p}{p}}\right).\notag\\
&=\left(\sum\limits_{\psi,o(\psi)=p}\psi^{-1}(b)\right)\left(\sum\limits_{k=0}^{|D_p|}\left[u^{k}\right]\left(\sum\limits_{j=0}^{p-1} u^{j}\right)^{\frac{N_p}{p}}\right)^s.
\end{align}
Noting that the sum
\begin{eqnarray}
\sum\limits_{k=0}^{|D_p|}\left[u^{k}\right]\left(\sum\limits_{j=0}^{p-1} u^{j}\right)^{\frac{N_p}{p}}=p^{N_p/p}
\end{eqnarray}
since it is the sum of all coefficients of the multinomial expansion of $(1+u+u^2+\cdots+u^{p-1})^{N_p/p}$, we obtain the following from (\ref{exps1}), (\ref{exps2}) and (\ref{summulti}):
\begin{eqnarray}\label{Rr}
\varSigma_{p,s,n,b}&:=&\sum_{0\leq k_1,k_2,\cdots,k_s\leq |D_{p}|}(-1)^{k_1+k_2+\cdots+k_s}Q_{k_1,k_2,\cdots,k_s}\notag\\&=&\left\{\begin{array}{cc}
(p-1)\cdot p^{sN_p/p},&\text{if $p \mid b$,}\\
-p^{sN_p/p},&\text{otherwise.}\end{array}\right.
\end{eqnarray}
Next, we estimate $P_{k_1,k_2,\cdots,k_s}$ and $R_{k_1,k_2,\cdots,k_s}$. Consider 
\begin{align}\label{Sr}
&\left|\sum_{0\leq k_1,k_2,\cdots,k_s\leq |D_{p}|}(-1)^{k_1+k_2+\cdots+k_s}R_{k_1,k_2,\cdots,k_s} \right|\notag\\
&=
\left|\sum\limits_{\substack{\psi, p\mid o(\psi)\\ o(\psi)\neq p}}\psi^{-1}(b) \sum_{0\leq k_1,k_2,\cdots,k_s\leq |D_p|}\prod_{i=1}^s\dfrac{1}{k_i!}\left[\dfrac{u^{k_i}}{k_i!}\right]\left(\sum\limits_{j=0}^{p-1}u^{o(\psi)j/p}\right)^{\frac{N_p}{o(\psi)}}\right|\notag\\
&=
\left|\sum\limits_{\substack{\psi,p \mid o(\psi)\\ o(\psi)>p}}\psi^{-1}(b)\sum\limits_{0\leq k_1,k_2,\cdots,k_s\leq |D_p|}\prod_{i=1}^s\left[{u^{k_i}}\right]\left(\sum\limits_{j=0}^{p-1}u^{o(\psi)j/p}\right)^{\frac{N_p}{o(\psi)}}\right|\notag\\
&\leq \left|\sum\limits_{\substack{p \mid o(\psi)\\o(\psi)>p}}\left(\sum_{k=0}^{|D_p|}[u^k]\left(\sum_{j=0}^{p-1}u^{o(\psi)j/p}\right)^{N_p/o(\psi)}\right)^s\right|\notag\\
&\leq \sum\limits_{\substack{p \mid o(\psi)\\o(\psi)>p}} p^{sN_p/o(\psi)}=N_p\cdot p^{sN_p/2p}.
\end{align}
Finally, we consider
\begin{eqnarray*}
&&\left|\sum_{0\leq k_1,k_2,\cdots,k_s\leq |D_{p}|}(-1)^{k_1+k_2+\cdots+k_s}P_{k_1,k_2,\cdots,k_s} \right|\notag\\&&=\left|
\sum\limits_{\psi, p\nmid o(\psi)}\psi^{-1}(b)\sum_{0\leq k_1,k_2,\cdots,k_s\leq |D_{p}|} \prod_{i=1}^s\dfrac{1}{k_i!}\left[\dfrac{u^{k_i}}{k_i!}\right](1-u^{o(\psi)})^{\frac{(p-1)N_p}{po(\psi)}}\right|\notag
\end{eqnarray*}
\begin{eqnarray}\label{Qr}
&&= \left|\sum\limits_{\substack{\psi,p\nmid o(\psi)\\ o(\psi)>1}}\psi^{-1}(b)\sum_{0\leq k_1,k_2,\cdots,k_s\leq |D_{p}|} \prod_{i=1}^s\left[u^{k_i}\right](1-u^{o(\psi)})^{\frac{(p-1)N_p}{po(\psi)}}\right|\notag\\
&&=\left|\sum\limits_{\substack{\psi,p\nmid o(\psi)\\ o(\psi)>1}}\psi^{-1}(b)\left(\sum_{k=0}^{|D_p|} \left[u^{k}\right](1-u^{o(\psi)})^{\frac{(p-1)N_p}{po(\psi)}}\right)^s\right|\notag\\
&&=0,
\end{eqnarray}
where the last step is obtained by noting that $\sum\limits_{k=0}^{|D_{p}|}[u^{k}](1-u^{j})^{\frac{(p-1)N_p}{pj}}$ is the sum of all coefficients of $(1-u^{j})^{\frac{(p-1)N_p}{pj}}$ which is zero. 

Hence (\ref{Mbr}), (\ref{Rr}), (\ref{Sr}) and (\ref{Qr}) yield
\begin{align*}
\left|M_{p,s,n}(b)-\dfrac{\varSigma_{p,s,n,b}}{N_p}\right| &\leq N_p^{-1} \left|\sum_{0\leq k_1,k_2,\cdots,k_s\leq |D_{p}|}(-1)^{k_1+k_2+\cdots+k_s}P_{k_1,k_2,\cdots,k_s} \right|\notag\\&+ N_p^{-1}\left|\sum_{0\leq k_1,k_2,\cdots,k_s\leq |D_{p}|}(-1)^{k_1+k_2+\cdots+k_s}R_{k_1,k_2,\cdots,k_s} \right|\notag\\
& \leq p^{sN_p/2p},
\end{align*}
which yields the theorem.
\end{proof}
\begin{proof}[Proof of Theorem \emph{\ref{main2}}]
To prove this theorem, we need to consider two cases.\\ \\
\textbf{Case I}: $p\mid b$. In this case, from Theorem \ref{main1} we have 
\begin{eqnarray*}
\varSigma_{p,s,n,b}=(p-1)\cdot p^{s(n+1)}.
\end{eqnarray*}
Thus the main term in Theorem \ref{main1} dominates the error provided
\begin{eqnarray*}
\dfrac{(p-1)\cdot p^{s(n+1)-1}}{n+1}>p^{s(n+1)/2}
\end{eqnarray*}
It is now clear that for all $n\geq \inf\{n\in\mathbb{N}: (p-1)p^{s(n+1)/2-1}>n+1\}$, $M_{p,s,b}(b)>0$ when $p\mid b$. \\ \\
\textbf{Case II}: $p\nmid b$. In this case, from Theorem \ref{main1} we have 
\begin{eqnarray*}
\varSigma_{p,s,n,b}=-p^{s(n+1)}.
\end{eqnarray*}
Thus the absolute value of the main term in Theorem \ref{main1} dominates the error provided
\begin{eqnarray*}
\dfrac{p^{s(n+1)-1}}{n+1}>p^{s(n+1)/2}.
\end{eqnarray*}
It is now clear that for all $n\geq \inf\{n\in\mathbb{N}: p^{s(n+1)/2-1}>n+1\}$, $M_{p,s,n}(b)<0$ when $p\nmid b$. This proves the theorem. 
\end{proof}
\begin{proof}[Proof of Theorem \emph{\ref{main3}}]
The first part of the theorem follows from Theorem \ref{main1} by choosing $p=3$ and $s=1$. For the other part, we use Theorem \ref{main2}. Thus the smallest $n_{3,1,b}\in\mathbb{N}$ for which
\begin{eqnarray*}
3^{(n+1)/2-1}>n+1
\end{eqnarray*}
holds true is $n_{3,1,b}=4$. Thus for all $n\geq 4$ we have $M_{3,1,n}(b)<0$. Also by direct computation, one shows that $M_{3,1,n}(b)<0$ for all $n<4$. Indeed, using Wang's result \cite{Wang}, one immediately concludes that $M_{3,1,n}(b)<0$ without any of the above analysis.
\end{proof}
\begin{proof}[Proof of Theorem \emph{\ref{main4}}]
The first part of this theorem follows directly from Theorem \ref{main1} by choosing $p=3$ and $s=2$. 

For the other part, we use Theorem \ref{main2}. Thus in the case $b\equiv 0 \pmod 3$ we have
\begin{eqnarray*}
2\cdot 3^{n}>n+1
\end{eqnarray*}
for all $n\in\mathbb{N}$. Hence $M_{3,2,n}>0$ for all $n\in\mathbb{N}$. In the case $b\not\equiv 0 \pmod 3$ we have
\begin{eqnarray}
3^{n}>n+1
\end{eqnarray}
holds true for all $n\in\mathbb{N}$. Hence $M_{3,2,n}<0$ for all $n\in\mathbb{N}$.
\end{proof}
\begin{proof}[Proof of Theorem \emph{\ref{main5}}]
The first part of this theorem follows directly from Theorem \ref{main1} by choosing $p=5$ and $s=1$. 

For the other part, we use Theorem \ref{main2}. Thus in the case $b\equiv 0 \pmod 5$ we have
\begin{eqnarray*}
4\cdot 5^{(n+1)/2-1}>n+1
\end{eqnarray*}
for all $n\in\mathbb{N}$. Hence $M_{5,1,n}>0$ for all $n\in\mathbb{N}$. In the case $b\not\equiv 0 \pmod 5$ we have
\begin{eqnarray}
5^{(n+1)/2-1}>n+1
\end{eqnarray}
holds true for all $n\geq 3$. By direct computation one checks that $M_{5,1,n}<0$ for all $n<3$. Hence $M_{5,1,n}<0$ for all $n\in\mathbb{N}$.
\end{proof}
\begin{proof}[Proof of Theorem \emph{\ref{Borw1}}]
Using Cauchy's formula we see that 
 \begin{eqnarray}\label{maxpeak11}
 |t_{j,p,s}|=\left|\dfrac{1}{2i\pi}\int_{|q|=1}\dfrac{T_{p,s,n}(q)}{q^{j+1}}dq\right|\leq \dfrac{1}{2\pi}\max_{|q|=1}|T_{p,s,n}(q)|\int_{|q|=1}\dfrac{|dq|}{|q|^{j+1}}=\max_{|q|=1}|T_{p,s,n}(q)|.
 \end{eqnarray}
 On the other hand we have
 \begin{eqnarray}\label{maxpeak22}
 \max_{|q|=1}|T_{p,s,n}(q)|\leq \sum_{0\leq j\leq N_{s,n}}|t_{j,p,s}|\leq (d_{p,s,n}+1)\max_{j}|t_{j,p,s}|. 
 \end{eqnarray}
 Since $d_{p,s,n}\leq sp^2n^2$, (\ref{maxpeak11}) and (\ref{maxpeak22}) imply
 \begin{eqnarray}\label{Borwe1}
 \log_p \max_{j}|t_{j,s,n}|=\log_p \max_{|q|=1}|T_{p,s,n}(q)|+O(\log n).
 \end{eqnarray}
 Thus the theorem follows if we show that 
 \begin{eqnarray}\label{Borwe2}
 \log_p \max_{|q|=1}|T_{p,s,n}(q)|=s(n+1)+O(\log n).
 \end{eqnarray}
 We note that
 \begin{eqnarray}\label{Borwe3}
 \max_{|q|=1}|T_{p,s,n}(q)|=\max_{|q|=1}\left|\prod_{j=1}^n(1-q^j)\right|^s=\left(\max_{|q|=1}\left|\prod_{j=0}^n\prod_{k=1}^{p-1}(1-q^{pj+k})\right|\right)^s.
 \end{eqnarray}
 From \cite[Theorem 1, pp. 229]{Bor}, we have 
 \begin{eqnarray}\label{Borwe4}
 \log_p\max_{|q|=1}\left|\prod_{j=0}^n\prod_{k=1}^{p-1}(1-q^{pj+k})\right|=n+1+O\left(\dfrac{1}{n}\right).
 \end{eqnarray}
Now the estimate (\ref{Borwe2}) and thus the theorem follow from (\ref{Borwe1}), (\ref{Borwe3}) and (\ref{Borwe4}).
\end{proof} 
\begin{proof}[Proof of Theorem \emph{\ref{Borw2}}]
We have
\begin{eqnarray}
 \max_{|q|=1}|T_{p,s,n}(q)|\leq \sum_{0\leq j\leq N_{s,n}}|t_{j,p,s}|\leq (d_{p,s,n}+1)\max_{j}|t_{j,p,s}|,
\end{eqnarray}
which implies since $d_{p,s,n}\leq sp^2n^2$ that
\begin{eqnarray}\label{maxpeak222}
 \max_{j}|t_{j,p,s}|\geq \dfrac{1}{d_{p,s,n}+1}\max_{|q|=1}|T_{p,s,n}(q)|>\dfrac{1}{sp^2n^2}\max_{|q|=1}|T_{p,s,n}(q)|.
\end{eqnarray}
From \cite[Theorem 2, pp. 229]{Bor} we have
\begin{eqnarray}\label{Bo23}
\max_{|q|=1}\left|\prod_{j=0}^n\prod_{k=1}^{p-1}(1-q^{pj+k})\right|\gtrsim (1.219\cdots)^{(p-1)(n+1)}>p^{n+1}. 
\end{eqnarray}
Since 
\begin{eqnarray}
\max_{|q|=1}|T_{p,s,n}(q)|=\left(\max_{|q|=1}\left|\prod_{j=0}^n\prod_{k=1}^{p-1}(1-q^{pj+k})\right|\right)^s,
\end{eqnarray}
using (\ref{Bo23}) in (\ref{maxpeak222}) yields
\begin{eqnarray}
\max_{j}|t_{j,p,s}|\gtrsim \dfrac{(1.219\cdots)^{s(p-1)(n+1)}}{sp^2n^2}>\dfrac{p^{s(n+1)-2}}{sn^2}.
\end{eqnarray}
\end{proof}

\end{document}